\newtheorem{theorem}{Theorem}
\newtheorem{definition}[theorem]{Definition}
\newtheorem{lemma}[theorem]{Lemma}
\newtheorem{problem}[theorem]{Problem}
\newcommand{\Cay}{\mathop{\mathrm{Cay}}}
\newcommand{\smod}{\hspace{-0.1cm} \pmod}
\newcommand{\ssmod}{\hspace{-0.2cm} \smod}
\let\oldproofname=\proofname
\renewcommand{\proofname}{\rm\bf{\oldproofname}}
\begin{document}
\title{\textbf{On Hamilton Decompositions\\ of Infinite Circulant Graphs}}


\author[1]{Darryn Bryant \thanks{db@maths.uq.edu.au}}
\author[1]{Sarada Herke \thanks{s.herke@uq.edu.au}}
\author[1]{Barbara Maenhaut \thanks{bmm@maths.uq.edu.au}}
\author[2]{Bridget Webb\thanks{bridget.webb@open.ac.uk}}
\affil[1]{{\small School of Mathematics and Physics\\ The University of Queensland\\ QLD 4072, Australia} }
\affil[2]{{\small School of Mathematics and Statistics, The Open University, Milton Keynes, MK7 6AA, United Kingdom}}


\maketitle

\begin{abstract}

The natural infinite analogue of a (finite) Hamilton cycle is a two-way-infinite Hamilton path (connected spanning 2-valent subgraph). 
Although it is known that every connected $2k$-valent infinite circulant graph has a two-way-infinite Hamilton path, there exist many such graphs that do not have a decomposition into $k$ edge-disjoint two-way-infinite Hamilton paths. This contrasts with the finite case where it is conjectured that every $2k$-valent connected circulant graph
has a decomposition into $k$ edge-disjoint Hamilton cycles. We settle the problem of decomposing $2k$-valent infinite circulant graphs into $k$ edge-disjoint two-way-infinite Hamilton paths  
for $k=2$, in many cases when $k=3$, and in many other cases including where the connection set
is $\pm\{1,2,\ldots,k\}$ or $\pm\{1,2,\ldots,k-1,k+1\}$.

\end{abstract}

\section{Introduction and Notation\label{Into}}

Hamiltonicity refers to graph properties related to Hamilton cycles or Hamilton paths, and its study includes many classical problems in graph theory. 
One such problem is the Lov\'asz Conjecture \cite{Lo} which states that every finite connected vertex-transitive graph has a Hamilton path. There are only four non-trivial finite connected vertex-transitive graphs that are known to not admit a Hamilton cycle; the Petersen graph, the Coxeter graph, and the two graphs obtained from these by replacing each vertex with a triangle. None of these is a Cayley graph and a well-known conjecture states that 
every finite non-trivial connected Cayley graph has a Hamilton cycle, see \cite{WiGa}. 
Both the above-mentioned conjectures remain open.

A \textit{decomposition} of a graph is a set of edge-disjoint subgraphs which collectively contain all the edges; a decomposition into Hamilton cycles is called a \textit{Hamilton decomposition}, and a graph admitting a Hamilton decomposition is said to be {\textit{Hamilton-decomposable}. 
An obvious necessary condition for a Hamilton decomposition of a graph is that the graph be regular of even valency. Sometimes, a decomposition of a $(2k+1)$-valent graph into $k$ Hamilton cycles and a perfect matching is also called a Hamilton decomposition, but here we do not consider these to be Hamilton decompositions. 

In 1984, Alspach \cite{A} asked whether every $2k$-valent connected Cayley graph on a finite abelian group is Hamilton-decomposable.
It is known that every connected Cayley graph on a finite abelian group has a Hamilton cycle \cite{CheQui}, so it makes sense to consider the stronger property of Hamilton-decomposability.
Alspach's question is now commonly referred to as Alspach's conjecture. It holds trivially when $k=1$ and Bermond et al. proved that it holds for $k=2$ \cite{BeFaMa}. The case $k=3$ is still open, although many partial results exist, see \cite{Dea1,Dea2,W,Wes2,WesLiuKre}. There are also results for $k>3$, see \cite{AlsBryKre,FanLicLiu,Liu1,Liu2,Liu3}. It was shown in \cite{BryDea} that there exist $2k$-valent connected Cayley graphs on finite non-abelian groups that are not Hamilton-decomposable.

In this paper, we study the natural extension of Alspach's question to the case of Cayley graphs on infinite abelian groups, specifically in the case of the infinite cyclic group $\mathbb{Z}$.
We will not be considering any uncountably infinite graphs, so it should be assumed that the order of any graph in this paper is countable.
The natural infinite analogue of a (finite) Hamilton cycle is a \textit{two-way-infinite Hamilton path}, which is defined as a connected spanning $2$-valent subgraph.
This is, of course, an exact definition for a Hamilton cycle in the finite case, and accordingly we define a \textit{Hamilton decomposition} of an infinite graph 
to be a decomposition into two-way-infinite Hamilton paths. 
A \textit{one-way-infinite Hamilton path} is a connected spanning subgraph in which there is exactly one vertex of valency $1$, and the remaining vertices have valency $2$. 
For convenience, since we will not be dealing with one-way-infinite Hamilton paths, we refer to two-way-infinite Hamilton paths simply as \textit{Hamilton paths}, or as \textit{infinite Hamilton paths} if we wish to emphasise that the path is infinite.

Hamiltonicity of infinite circulant graphs, and of infinite graphs generally, has already been studied. In 1959, Nash-Williams \cite{NW} showed that every connected Cayley graph on a finitely-generated infinite abelian group has a Hamilton path. This result was proved again by Zhang and Huang in the special case of infinite circulant graphs \cite{ZH}.  There are also results on Hamilton paths in infinite Cayley digraphs, see \cite{DuJuWi, JunD, JunI} and Witte and Gallian's survey \cite{WiGa} on Hamilton cycles in Cayley graphs.

Hamiltonicity of infinite circulant graphs, and of infinite graphs generally, 
has already been studied. 
In 1959, Nash-Williams \cite{NW} showed that every connected Cayley graph on a finitely-generated infinite abelian group has a Hamilton path.
It seems that Nash-Williams' paper is largely unknown. For example, it is not cited in the 
1984 survey by Witte and Gallian \cite{WiGa}, and in 1995 Zhang and Huang \cite{ZH} proved the above-mentioned result of Nash-Williams in the special case of infinite circulant graphs.  
Indeed, D. Jungreis' paper \cite{JunD} on Hamilton paths in infinite Cayley digraphs is one of the few papers to cite Nash-Williams' result. Other results on Hamilton paths in infinite Cayley digraphs can be found in \cite{DuJuWi, JunI}.

Given the existence of Hamilton paths in Cayley graphs on 
finitely-generated infinite abelian groups, it makes sense to consider Hamilton-decomposability of these graphs. 
In this paper, we investigate this problem in the special case of infinite circulant graphs.
Witte \cite{Wi} proved that an infinite graph with infinite valency has a Hamilton decomposition if and only if it has infinite edge-connectivity and has a Hamilton path. By combining this characterisation with the result of Nash-Williams, we observe that if a connected Cayley graph on a finitely-generated infinite abelian group has infinite valency,
then it is Hamilton-decomposable, see Theorem \ref{Thm: inf S}. 

In Lemma \ref{NecCondition} we prove necessary conditions for an infinite circulant graph to be Hamilton-decomposable, thereby showing that not all connected infinite circulant graphs are Hamilton-decomposable. Since there are no elements of order $2$ in $\mathbb{Z}$, any infinite circulant graph with finite connection set is regular of valency $2k$ and is $2k$-edge-connected, for some non-negative integer $k$. Thus, neither the valency nor the edge-connectivity is an immediate obstacle to Hamilton-decomposability. 

We call infinite circulant graphs \textit{admissible} if they satisfy the necessary conditions for Hamilton-decomposability given in Lemma \ref{NecCondition} (see Definition \ref{d:admissible}). 
In Section \ref{sec:4Reg} we prove that all admissible 4-valent
infinite circulant graphs are Hamilton-decomposable.  We also show, in Section \ref{sec:2kReg}, that several other infinite families of infinite circulant graphs are Hamilton-decomposable, including many 6-valent infinite circulant graphs, and several families
with arbitrarily large finite valency.

Throughout the paper we make use of the following notation and terminology.
Let $\mathcal{G}$ be a group with identity $e$ and $S \subseteq \mathcal{G} - \{ e\}$ which is inverse-closed, that is, $s^{-1} \in S$ if and only if $s \in S$.  The \textit{Cayley graph} on the group $\mathcal{G}$ with \textit{connection set} $S$, denoted $ \Cay (\mathcal{G},S)$, is the undirected simple graph whose vertices are the elements of $\mathcal{G}$ and whose edge set is $\{ \{ g, gs \} \mid g \in \mathcal{G}, s \in S\}$.  When $\mathcal{G}$ is an infinite group, we call $\Cay(\mathcal{G},S)$ an \textit{infinite Cayley graph}.  
When $\mathcal{G}$ is a cyclic group, a Cayley graph $\Cay(\mathcal{G},S)$ is called a \textit{circulant graph}. 
Since we are interested in infinite circulant graphs, we will be considering graphs $\Cay(\mathbb{Z}, S)$, where $S$ is an inverse-closed set of distinct non-zero integers, which may be finite or infinite.  We define $S^+ = \{a \in S \mid a > 0\}$. Observe that if $|S^+|=k$ then $\Cay(\mathbb{Z}, S)$ is a $2k$-valent graph.

If $A$ is any subset of $\mathbb{Z}$ and $t\in\mathbb{Z}$, 
then we write $A+t$ to represent the set $\{ a+t \mid a \in A\}$.  
Furthermore, if $G$ is any graph with $V(G)\subseteq\mathbb{Z}$ and $t \in \mathbb{Z}$, 
then $G + t$ is the graph with vertex set
$\{x+t\mid x\in V(G)\}$ and 
edge set $\{ \{x+t, y+t\} \mid \{x,y\} \in E(G)\}$.
The \textit{length} of any edge $\{u,v\}$, denoted $\ell(u,v)$, in a graph with vertex set $\mathbb{Z}$ or $\mathbb{Z}_n$ is the distance from $u$ to $v$ in $\Cay(\mathbb{Z},\{\pm1\})$ or $\Cay(\mathbb{Z}_n,\{\pm 1\})$ if the vertex set is $\mathbb Z$ or $\mathbb Z_n$ respectively.

Next we discuss some notation for walks and paths in infinite circulant graphs which we will use throughout the remainder of the paper. The finite path with vertex set $\{v_1,v_2,\ldots,v_t\}$ and edge set 
$\{\{v_1,v_2\},\{v_2,v_3\},\ldots,\{v_{t-1},v_t\}\}$ is denoted $[v_1,v_2,\ldots,v_t]$. 
For $a \in \mathbb{Z}$ and $z_1, z_2, \dots, z_t \in S$, we define $\Omega_a( z_1, z_2, \dots, z_t)$ to be the walk in $\Cay(\mathbb{Z}, S)$ where the sequence of vertices is 
\[
a, \quad  a+z_1, \quad a+z_1+z_2, \quad  \dots \quad 
\quad a+ \sum_{i=1}^t z_i,
\]
so the lengths of the edges in the walk are $|z_1|, |z_2|, \dots, |z_t|$.  
Whenever we write $\Omega_a( z_1, z_2, \dots, z_t)$ it will be the case that $[a, a+z_1, \dots, a+\sum_{i=1}^t z_t]$ is a path 
, and we will use the notation $\Omega_a( z_1, z_2, \dots, z_t)$ interchangeably for both
the walk (with associated orientation, start and end vertices) and the path 
$[a, a+z_1, \dots, a+\sum_{i=1}^t z_t]$ (which is a graph with no inherent orientation).

\section{Necessary Conditions and Infinite Valency\label{sec:NecCond}} 


The following two lemmas give a characterisation of connected infinite circulant graphs, and 
necessary conditions for an infinite circulant graph to be Hamilton-decomposable.  We remark that the main idea in the proof of Lemma \ref{NecCondition} has been used in \cite{BrMa, HeMa}.

\begin{lemma} \textup{\cite{ZH}} \label{Lem: Connected}
If $S$ is an inverse-closed set of distinct non-zero integers and $\gcd(S) = d$, then $\Cay(\mathbb{Z}, S)$ has $d$ connected components which are each isomorphic to $\Cay(\mathbb{Z}, \{\frac{a}{d} \mid a \in S \})$.  In particular, $\Cay(\mathbb{Z}, S)$ is connected if and only if $\gcd(S) = 1$. 
\end{lemma}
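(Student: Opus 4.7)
The plan is to establish the component structure by first noting that $d \mid a$ for every $a \in S$, so every edge of $\Cay(\mathbb{Z},S)$ joins two vertices whose difference is divisible by $d$. Consequently the vertex set splits as the disjoint union of the $d$ residue classes $i + d\mathbb{Z}$ for $0 \le i < d$, and no edge crosses between classes. This immediately gives at least $d$ connected components, one contained in each residue class.

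Next I would exhibit the claimed isomorphism on each class. For a fixed $i$, the induced subgraph on $i + d\mathbb{Z}$ has edges $\{i+dk,\,i+dk+a\}$ for $k\in\mathbb{Z}$ and $a\in S$. Applying the bijection $i+dk\mapsto k$ and dividing the edge displacement by $d$ (which is permissible since $a/d \in \mathbb{Z}$ for every $a\in S$) carries the induced subgraph onto $\Cay(\mathbb{Z},\{a/d\mid a\in S\})$, and the new connection set is still inverse-closed. By construction, the new connection set has gcd $1$.

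It therefore suffices to prove the ``in particular'' statement: if $\gcd(S)=1$, then $\Cay(\mathbb{Z},S)$ is connected. Here the only subtlety is that $S$ may be infinite. I would handle this by observing that the gcds of finite subsets of $S$ form a descending chain of positive integers, which by well-ordering stabilises at some finite subset $T\subseteq S$ with $\gcd(T)=\gcd(S)=1$. Bezout's identity then furnishes integers $c_s$, one for each $s\in T$, with $\sum_{s\in T} c_s s = 1$. Since $S$ is inverse-closed, both $s$ and $-s$ are available as edge-steps, so one can construct a walk from $0$ to $1$ in $\Cay(\mathbb{Z},S)$ by taking $|c_s|$ steps of signed length $\operatorname{sgn}(c_s)\,s$ for each $s\in T$. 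Hence $0$ and $1$ lie in the same component, and translation-invariance of the Cayley graph then propagates this to show that all of $\mathbb{Z}$ lies in a single component.

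No step here is a serious obstacle; the only place that demands slight care is the reduction from an arbitrary (possibly infinite) $S$ to a finite subset with the same gcd, so that the classical Bezout argument applies. Once that is in place, the two halves of the lemma — the partition into $d$ residue classes and the connectedness of each class — fit together to give the stated isomorphism and the connectedness criterion.
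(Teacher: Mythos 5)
Your proof is correct. Note that the paper itself gives no proof of this lemma --- it is quoted from Zhang and Huang \cite{ZH} --- so there is no in-paper argument to compare against; your argument (partition into residue classes modulo $d$, the rescaling isomorphism $i+dk\mapsto k$, and connectedness via B\'ezout after reducing to a finite subset $T\subseteq S$ with $\gcd(T)=\gcd(S)$) is the standard and complete way to establish it, and the care you take with possibly infinite $S$ is exactly the one point that needs attention.
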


\begin{lemma} \label{NecCondition}
If $\Cay(\mathbb{Z}, S)$ is Hamilton-decomposable, then 
\begin{itemize}
\item[(i)]  $S=\emptyset$ or $\gcd(S) = 1$; and
\item[(ii)] if $S$ is finite, then $\sum\limits_{a \in S^+}^{} a \equiv |S^+| \smod 2$.
\end{itemize}
\end{lemma}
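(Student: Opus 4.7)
My plan is to handle the two conditions separately, with the parity statement (ii) being the substantive one.

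For condition (i), I would observe that if $S=\emptyset$ there is nothing to prove, so assume $S\neq\emptyset$. Then $\Cay(\mathbb{Z},S)$ has at least one edge, so any Hamilton decomposition contains at least one (two-way-infinite) Hamilton path $P$. Since $P$ is, by definition, a connected spanning subgraph, $\Cay(\mathbb{Z},S)$ itself is connected, and Lemma \ref{Lem: Connected} then forces $\gcd(S)=1$.

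For condition (ii), the approach is a parity-of-cut-edges argument. Assume $S$ is finite with $|S^+|=k$, so $\Cay(\mathbb{Z},S)$ is $2k$-valent and any Hamilton decomposition consists of exactly $k$ two-way-infinite Hamilton paths $P_1,\dots,P_k$. For $n\in\mathbb{Z}$, define the cut $C_n$ to be the set of edges $\{u,v\}$ with $u\le n<v$. A direct count shows that for each $a\in S^+$ there are exactly $a$ edges of length $a$ in $C_n$, so $|C_n|=\sum_{a\in S^+} a$. The whole argument will follow once I show that each $P_i$ contributes an \emph{odd} number of edges to $C_n$, since then
\[
\sum_{a\in S^+}a \;=\; |C_n| \;=\; \sum_{i=1}^{k}|P_i\cap C_n| \;\equiv\; k \;=\; |S^+| \pmod{2}.
\]

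The heart of the proof, and the main thing that needs care, is the parity claim for a single Hamilton path $P$. I would parameterise $P$ as a bi-infinite sequence $(w_j)_{j\in\mathbb{Z}}$ of its vertices and set $\phi(j)=1$ if $w_j>n$ and $\phi(j)=0$ otherwise; then $|P\cap C_n|$ equals the number of indices $j$ with $\phi(j)\neq\phi(j+1)$. First, since $S$ is finite, every edge of $P$ has length at most $L:=\max S^+$, so every edge of $C_n$ has both endpoints in the finite interval $[n-L+1,n+L]$; in particular only finitely many edges of $P$ lie in $C_n$, so $\phi$ is eventually constant in both directions. Let $\alpha_+$ and $\alpha_-$ be its eventual values as $j\to+\infty$ and $j\to-\infty$. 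The key observation is that $\alpha_+\neq\alpha_-$: if they agreed, then $P$ would visit one side of the cut only finitely often, contradicting the fact that both $\{i\le n\}$ and $\{i>n\}$ are infinite and $P$ is spanning. Hence $\phi$ makes an odd number of transitions, and $|P\cap C_n|$ is odd, completing the proof.

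The only real obstacle is justifying this parity lemma for a single infinite path cleanly; once the finiteness of $|P\cap C_n|$ (from bounded edge-length) and the bi-infinitude of each side of the cut are established, the parity conclusion is essentially forced.
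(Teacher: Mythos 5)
Your proposal is correct and follows essentially the same route as the paper: part (i) via connectivity and Lemma \ref{Lem: Connected}, and part (ii) by counting the $\sum_{a\in S^+}a$ edges crossing a cut and noting each Hamilton path contributes an odd number of them. The only difference is that you carefully justify the odd-crossing claim (via the eventually-constant indicator $\phi$ with distinct limits at $\pm\infty$), a step the paper dismisses as clear.
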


\begin{proof}
Suppose $\Cay(\mathbb{Z}, S)$ has Hamilton decomposition $\mathcal{D}$. 
A Hamilton-decomposable graph is clearly either empty or connected, 
and so (i) follows immediately from Lemma \ref{Lem: Connected}.  
If $S$ is finite, then let $k=|S^+|$ and let $E=\{\{u,v\}\in E(\Cay(\mathbb{Z}, S)) \mid u\leq 0,v\geq 1\}$.
For each $a\in S^+$, there are exactly $a$ edges of length $a$ in $E$, and so we have 
$|E|=\sum\limits_{a \in S^+}^{} a$.
However, it is clear that each of the $k$ Hamilton paths in $\mathcal{D}$ 
has an odd number of edges from $E$. This means that $|E|\equiv k\smod 2$, and (ii) holds.
\end{proof}

It may seem plausible to use a Hamilton decomposition of a finite circulant graph 
to construct a Hamilton decomposition of an infinite circulant graph whose edges have the same lengths as those of the finite graph.
However, this is not possible in general.
For example, $\Cay(\mathbb{Z}_n, \pm\{1,2\})$ is Hamilton-decomposable for every $n \geq 5$, yet $\Cay(\mathbb{Z}, \pm\{1,2\})$ is not Hamilton-decomposable by Lemma \ref{NecCondition}.  


\begin{definition}\label{d:admissible} An infinite circulant graph $\Cay(\mathbb{Z}, S)$  is \textbf{admissible} if it satisfies (i) and (ii) from Lemma \ref{NecCondition}.  
\end{definition}

There are infinitely many connected infinite circulant graphs that are not admissible (and thus not Hamilton-decomposable).  For example, if $S$ is finite and $\Cay(\mathbb{Z}, S)$ is admissible, then for every even positive integer $s\notin S^+$, $\Cay(\mathbb{Z}, S \cup \pm\{s\})$ is not admissible.
We have found no admissible infinite circulant graphs that are not Hamilton-decomposable, and thus we pose the following problem. 

\begin{problem} \label{mainproblem}
Is every admissible infinite circulant graph Hamilton-decomposable?
\end{problem}

We now show that results from \cite{Wi} and \cite{NW} combine to settle this problem for the case where $S$ is infinite. 
An infinite graph $G$ with infinite valency is \textit{$\infty$-connected} if $G \setminus U$ is connected for every finite subset $U \subset V(G)$ (that is, $G$ has no finite cut-set).  An infinite graph $G$ with infinite valency has \textit{infinite edge-connectivity} if $G \setminus A$ is connected for every finite subset $A \subset E(G)$ (that is, $G$ has no finite edge-cut).  It is easy to see that if $G$ is $\infty$-connected then $G$ has infinite edge-connectivity, but the converse of this does not hold.


\begin{theorem} \textup{\cite{Wi}} \label{Lemma_inf_vert_trans}
Every vertex-transitive infinite graph $G$ of infinite valency that has a Hamilton path is $\infty$-connected.
\end{theorem}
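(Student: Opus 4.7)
The plan is to show directly that $G \setminus U$ is connected for every finite $U \subseteq V(G)$. Fix a two-way-infinite Hamilton path $P = (v_n)_{n \in \mathbb{Z}}$ of $G$ and choose $N$ so large that $U \subseteq \{v_n : |n| \leq N\}$. Writing $R_+ = \{v_n : n > N\}$ and $R_- = \{v_n : n < -N\}$, each of $R_+$ and $R_-$ is a subpath of $P$ contained in $V(G) \setminus U$, hence a connected subgraph of $G \setminus U$.

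The first step is a direct appeal to infinite valency: any vertex $w \in V(G) \setminus U$ has infinite degree in $G$, while the finite set $\{v_n : |n| \leq N\}$ can absorb only finitely many of its neighbors. Hence $w$ has infinitely many (and in particular at least one) neighbors in $R_+ \cup R_-$, so $w$ lies in the same component of $G \setminus U$ as $R_+$ or as $R_-$, and it remains only to show that $R_+$ and $R_-$ share a common component.

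This is the main obstacle, and I would tackle it by contradiction. If $R_+$ and $R_-$ were in distinct components of $G \setminus U$, then $G$ would have no edge with one endpoint in $R_+$ and the other in $R_-$ (any such edge avoids $U$), and no $w \in \{v_n : |n| \leq N\} \setminus U$ could be simultaneously adjacent to $R_+$ and $R_-$ (else the length-two path through $w$ would join them in $G \setminus U$). These observations force $V(G) \setminus U$ to split into two infinite sets $A \supseteq R_+$ and $B \supseteq R_-$ with no edges of $G$ between them, so $U$ becomes a finite vertex cut separating $G$ into two infinite pieces.

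To close the argument I would invoke vertex-transitivity. Since $|V(G)|$ is infinite and $|U|$ is finite, a coset-counting argument in $\Aut(G)$ produces $\phi \in \Aut(G)$ with $\phi(U) \cap U = \emptyset$, chosen so that $\phi(U) \subseteq A$; then $\phi(U)$ is itself a finite cut giving a second partition $V(G) = \phi(A) \cup \phi(U) \cup \phi(B)$ across which no edges of $G$ run. Overlaying the two partitions yields nine intersection cells, and by infinite valency each vertex has all but at most $|U \cup \phi(U)|$ of its neighbors within its own cell. The fact that $\phi(B)$ is infinite yet confined to the original $A$-side should then force a chain of edges giving a path in $G \setminus U$ from $R_+$ to $R_-$, contradicting the standing hypothesis. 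The hardest aspect is precisely this overlay step: vertex-transitivity alone cannot preclude a finite vertex cut in an infinite graph, so the contradiction must genuinely use all three hypotheses (infinite valency, vertex-transitivity, and the Hamilton path) together.
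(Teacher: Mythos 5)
The paper does not prove this theorem itself (it is quoted from Witte \cite{Wi}), so I can only assess your argument on its own terms. Your reduction is correct and is the natural first half of any proof: using the Hamilton path and infinite valency you rightly conclude that $G\setminus U$ has at most two components, one containing the ray $R_+$ and one containing $R_-$, so that a failure of $\infty$-connectivity forces a partition $V(G)=A\cup U\cup B$ with $A\supseteq R_+$, $B\supseteq R_-$ both infinite and no $A$--$B$ edges. The gap is in the final step, and it is twofold. First, the automorphism you want does not obviously exist: B.\ H.\ Neumann's separation lemma does give $\phi$ with $\phi(U)\cap U=\emptyset$, but you cannot additionally arrange $\phi(U)\subseteq A$, because a single automorphism only lets you place one prescribed vertex of $U$; the images of the remaining vertices of $U$ may straddle $A$ and $B$. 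Second, even granting such a $\phi$, you never identify the contradiction --- ``should then force a chain of edges from $R_+$ to $R_-$'' is a hope, not an argument, and as you yourself note, vertex-transitivity alone cannot rule out finite cuts, so the conclusion cannot follow from the overlay picture without a further idea.

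The missing idea is a corner (crossing-cuts) argument that again uses the Hamilton path and infinite valency in tandem. Given any two finite cuts $U$ and $U'$, label their sides so that $A,A'$ contain a tail of $R_+$ and $B,B'$ a tail of $R_-$. The mixed corner $A\cap B'$ misses a tail of both rays, hence is \emph{finite}; but every vertex of $A\cap B'$ has all of its neighbours inside $(A\cup U)\cap(B'\cup U')=(A\cap B')\cup(A\cap U')\cup(U\cap B')\cup(U\cap U')$, a finite set, contradicting infinite valency unless $A\cap B'=\emptyset$. Thus all cuts are nested: $A'\subseteq A\cup U$ and $B'\subseteq B\cup U$. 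Now take $U$ of minimum size, so every vertex of $U$ (and of every automorphic image of $U$) has a neighbour on each side of its cut. By vertex-transitivity each $a\in A$ lies in a minimum cut $U_a$ with sides $A_a,B_a$, so $a$ has a neighbour $y_a\in B_a\cap N(a)\subseteq(B\cup U)\cap(A\cup U)=U$. Since $A$ is infinite and $U$ finite, some $u\in U$ equals $y_a$ for infinitely many $a$, giving $u$ infinitely many neighbours in $A$; but for any one such $a$ we have $u\in B_a$, whence $N(u)\cap A\subseteq U_a\cap A$ is finite --- a contradiction. Your instinct that all three hypotheses must combine at the end is exactly right; the proposal just stops short of the combination that actually does it.
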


\begin{theorem} \textup{\cite{Wi}} \label{Lemma_inf_ham_decomp}
A countably infinite graph of infinite valency has a Hamilton decomposition if and only if it has a Hamilton path and infinite edge-connectivity.
\end{theorem}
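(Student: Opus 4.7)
The plan is to prove the two implications separately; the forward direction is a short edge-disjoint-paths argument, while the backward direction requires a back-and-forth construction.

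For the forward direction, suppose $G$ has a Hamilton decomposition $\mathcal{D}$. Each member of $\mathcal{D}$ is $2$-regular, so since $G$ has infinite valency the collection $\mathcal{D}$ must be infinite, and any single member is itself a Hamilton path. For infinite edge-connectivity, fix $u,v\in V(G)$. Each $P\in\mathcal{D}$ is a two-way-infinite path and so contains a unique finite $u$-$v$ subpath $Q_P$, and the family $\{Q_P:P\in\mathcal{D}\}$ is pairwise edge-disjoint because $\mathcal{D}$ is. This produces infinitely many edge-disjoint $u$-$v$ paths in $G$, so no finite edge set separates $u$ from $v$.

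For the backward direction, assume $G$ has infinite valency, infinite edge-connectivity, and at least one Hamilton path $P_0$. I would enumerate $V(G)=\{v_1,v_2,\dots\}$ and $E(G)=\{e_1,e_2,\dots\}$ and construct the Hamilton paths $H_1,H_2,\dots$ in stages, maintaining at each stage $n$ a finite family of pairwise edge-disjoint finite paths $\pi_j^{(n)}\subseteq H_j$ for $j\leq m_n$, each with two designated \emph{live} endpoints available for later extension. The construction alternates two kinds of stages. In a \textbf{vertex-absorbing} stage for $v_n$, for each $j\leq m_n$ such that $v_n\notin\pi_j^{(n)}$, we extend one live endpoint of $\pi_j^{(n)}$ along a walk of unused edges into $v_n$ and then out to a new live endpoint. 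In an \textbf{edge-assigning} stage for $e_n$, if $e_n$ is not yet used, we either attach it to some existing $\pi_j^{(n)}$ by extending a live endpoint, or we open a fresh path $H_{m_n+1}$ seeded at $e_n$ with its two vertices as the new live endpoints. The given Hamilton path $P_0$ serves as the initial skeleton of $H_1$, which is where the second hypothesis is used explicitly.

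The crucial enabler at every stage is the infinite edge-connectivity hypothesis: only finitely many edges have been committed so far, so the subgraph of unused edges still has no finite edge-cut and the required extension walk from a live endpoint to $v_n$ (avoiding a prescribed finite set of vertices and edges) can always be found. The main obstacle is the bookkeeping needed to ensure that each $H_j$ converges to a single connected two-way-infinite path rather than a disjoint union of rays, that each vertex is absorbed into every $H_j$ at some finite stage, and that each edge is eventually assigned. These three requirements are handled by a standard priority-style interleaving of the two stage types, together with the invariants that each $H_j$ always has exactly two live endpoints, these endpoints are never identified with one another, and extensions are always performed outwards. Together these guarantee that in the limit every $H_j$ is a spanning connected $2$-regular subgraph (hence a Hamilton path) and that the $H_j$ partition $E(G)$.
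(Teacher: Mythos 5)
The paper itself offers no proof of this statement---it is quoted directly from Witte \cite{Wi}---so your proposal has to stand entirely on its own. Your forward direction is fine: the finite $u$--$v$ subpaths of the infinitely many members of $\mathcal{D}$ are pairwise edge-disjoint, so every pair of vertices is joined by infinitely many edge-disjoint paths and no finite edge set can disconnect the graph.

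The backward direction, however, has a genuine gap at its central step. You justify every extension by asserting that the subgraph of unused edges still has no finite edge-cut and therefore contains ``the required extension walk from a live endpoint to $v_n$ (avoiding a prescribed finite set of vertices and edges).'' Infinite edge-connectivity gives no control over finite \emph{vertex} cuts, and it is the vertices already internal to $\pi_j^{(n)}$---not merely its edges---that the extension must avoid if $\pi_j$ is to remain a path. Concretely, let $G$ consist of two disjoint countable cliques $A$ and $B$ together with a single apex vertex $w$ adjacent to all of $A\cup B$ (and no $A$--$B$ edges). This $G$ has infinite valency, infinite edge-connectivity and a Hamilton path, so it satisfies the hypotheses; but if your greedy procedure ever produces a segment $\pi_j=[\dots,a,w,a',\dots]$ with $a,a'\in A$, then $w$ is internal, every $A$--$B$ path passes through $w$, and no live end of $\pi_j$ can ever reach $B$ again, so $\pi_j$ can never become spanning. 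Nothing in your sketch prevents this; more generally, nothing in it explains how the Hamilton-path hypothesis enters---you use $P_0$ only as an ``initial skeleton of $H_1$,'' which helps neither the other $H_j$ nor $H_1$ once it departs from $P_0$. The hypothesis is genuinely indispensable: an apex joined to infinitely many disjoint countable cliques has infinite valency and infinite edge-connectivity but no Hamilton path at all. What is missing is precisely the hard content of Witte's theorem: an extension lemma saying when a finite path can be prolonged through a prescribed vertex or edge while remaining completable to a spanning double ray, together with invariants locating the live ends relative to the finite vertex cuts of $G$. Without that, the difficult implication is not proved.
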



\begin{theorem} \textup{\cite{NW}} \label{Thm_Nash_Williams_ham_path}
Every connected Cayley graph on a finitely-generated infinite abelian group has a Hamilton path.
\end{theorem}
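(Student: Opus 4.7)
My plan is to build the required two-way-infinite Hamilton path as a nested limit of finite Hamilton paths via a K\"onig's-lemma compactness argument. By the structure theorem, $\mathcal{G}\cong\mathbb{Z}^r\oplus F$ for some $r\ge 1$ (since $\mathcal{G}$ is infinite) and some finite abelian group $F$. Since $\Cay(\mathcal{G},S)$ is connected, $S$ generates $\mathcal{G}$, and by replacing $S$ with a finite inverse-closed subset that still generates (which exists because $\mathcal{G}$ is finitely generated), I may assume $S$ itself is finite: any Hamilton path in the sparser spanning subgraph is automatically a Hamilton path in $\Cay(\mathcal{G},S)$.

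Next I would exhaust $\mathcal{G}$ by finite ``boxes'' $B_N=[-N,N]^r\times F$ under the structure decomposition, and try to find a Hamilton path of the induced subgraph of $\Cay(\mathcal{G},S)$ on $B_N$ for every large $N$. The natural tool is the finite quotient Cayley graph $\Cay(\mathbb{Z}_{2N+1}^r\oplus F,\bar S_N)$ obtained by reducing each $\mathbb{Z}$-coordinate of $S$ modulo $2N+1$: for $N$ large this quotient is connected, so by the Chen--Quimpo theorem it has a Hamilton cycle. Arranging that this cycle uses only one ``wrap-around'' edge of $B_N$, then deleting that edge and lifting, gives a Hamilton path of $B_N$ in $\Cay(\mathcal{G},S)$.

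To combine the finite paths into one infinite path, I would form the tree $T$ whose depth-$N$ nodes are all Hamilton paths of $B_N$ in $\Cay(\mathcal{G},S)$ whose restriction to $B_M$ is already a Hamilton path of $B_M$ for every $M<N$, with parent-child relation given by restriction. Each level of $T$ is finite, so K\"onig's infinity lemma produces an infinite descending branch; the union of the paths along this branch is the desired two-way-infinite Hamilton path of $\Cay(\mathcal{G},S)$.

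The main obstacle is populating $T$ at every level, i.e.\ showing that any given Hamilton path of $B_{N-1}$ extends to a Hamilton path of $B_N$. The Chen--Quimpo theorem gives a Hamilton cycle of the finite quotient without controlling its restriction to a sub-box, so the extension cannot be read off directly. Overcoming this will likely require an auxiliary lemma: the ``shell'' $B_N\setminus B_{N-1}$, together with its adjacencies to the boundary of $B_{N-1}$, admits a Hamilton path of its induced subgraph whose two endpoints can be attached to the endpoints of the given $B_{N-1}$-path by single edges of $\Cay(\mathcal{G},S)$. Proving such a Hamilton-connectedness statement for annular regions, uniformly in the generating set $S$, is where the heaviest combinatorial work will lie.
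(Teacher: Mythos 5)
First, note that the paper does not prove this statement: it is quoted directly from Nash-Williams \cite{NW}, whose argument is an explicit construction of the two-way-infinite path (in the spirit of generalized knight's tours, by induction on the torsion-free rank), not a compactness argument. So there is no in-paper proof to compare against, and your proposal has to stand on its own; as written it has genuine gaps.

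The decisive gap is the one you flag yourself but do not close: populating every level of the tree $T$. Essentially all of the content of the theorem is concentrated in the ``auxiliary lemma'' that the shell $B_N\setminus B_{N-1}$ carries a Hamilton path of its induced subgraph attachable by single edges to the endpoints of the previous path, and that lemma is false as stated. Already for $\mathcal{G}=\mathbb{Z}$ and $S=\pm\{2,3\}$ the shell is $\{-N,N\}$, which induces no edges at all for $N\ge 2$, so it has no spanning path; width-one shells can likewise fail to induce connected subgraphs in higher rank. At a minimum you must exhaust along a sparse subsequence of boxes so that each shell is thick relative to $\max S^+$, and then prove a uniform Hamilton-connectedness statement for these annular regions --- which is essentially the whole theorem. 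Two further steps are also unjustified. First, Chen--Quimpo gives a Hamilton cycle of the finite quotient $\Cay(\mathbb{Z}_{2N+1}^r\oplus F,\bar S_N)$ with no control over how many wrap-around edges it uses, and deleting several wrap-around edges leaves a disjoint union of paths in $B_N$ rather than a Hamilton path, so ``arranging that this cycle uses only one wrap-around edge'' is an unproved and nontrivial claim. Second, even granting an infinite branch of $T$, its union need only be a connected spanning subgraph of maximum valency $2$: if some vertex remains an endpoint of $P_N$ for all large $N$, the limit is a one-way-infinite Hamilton path, which the paper's definition explicitly excludes; you must build into $T$ the requirement that both endpoints of $P_N$ lie in the outermost shell. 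The overall architecture (exhaustion by boxes with contiguous restriction, which for $r\ge 2$ amounts to a spiral) is a reasonable program and close in spirit to how such constructions are actually carried out, but as it stands it is a program rather than a proof.
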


\begin{theorem}\label{Thm: inf S}
A Cayley graph of infinite valency on a finitely-generated infinite abelian group is Hamilton-decomposable if and only if it is connected.

\end{theorem}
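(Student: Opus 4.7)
The plan is to prove the two directions separately, with the forward direction being essentially trivial and the reverse direction following by chaining together the three theorems (Nash-Williams and the two Witte theorems) just quoted.

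For the ``only if'' direction, I would simply observe that any Hamilton-decomposable graph must be connected, since any single two-way-infinite Hamilton path in the decomposition is, by definition, a connected spanning subgraph. Thus the graph, which contains this spanning path as a subgraph, is itself connected. This needs no special structure from the group or the valency.

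For the ``if'' direction, let $G$ be a connected Cayley graph on a finitely-generated infinite abelian group, and assume $G$ has infinite valency. First I would observe that $G$ is countable: a finitely-generated abelian group is countable, so $G$ has countably many vertices, and since the valency is infinite but we work with a countable connection set, we are in the setting of Theorem \ref{Lemma_inf_ham_decomp}. Next, Theorem \ref{Thm_Nash_Williams_ham_path} gives that $G$ has a (two-way-infinite) Hamilton path. Since $G$ is a Cayley graph it is vertex-transitive, so Theorem \ref{Lemma_inf_vert_trans} applies and yields that $G$ is $\infty$-connected. As noted in the paragraph preceding Theorem \ref{Lemma_inf_vert_trans}, $\infty$-connectedness implies infinite edge-connectivity. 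Having verified that $G$ has both a Hamilton path and infinite edge-connectivity, Theorem \ref{Lemma_inf_ham_decomp} supplies a Hamilton decomposition.

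There is essentially no main obstacle here, since the three ingredients have already been assembled above the statement; the only care required is to check that the hypotheses of each invoked theorem are met (countability, vertex-transitivity, infinite valency, connectedness) and to note the implication from $\infty$-connectedness to infinite edge-connectivity. The proof will be only a few lines long.
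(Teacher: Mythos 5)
Your proposal is correct and follows exactly the same route as the paper's proof: connectivity is immediate in one direction, and in the other the chain Nash-Williams $\to$ vertex-transitivity plus Hamilton path $\to$ $\infty$-connectedness $\to$ infinite edge-connectivity $\to$ Hamilton decomposition is precisely the argument given. Your extra remark on countability is a harmless added check that the paper leaves implicit.
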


\begin{proof}
Let $G$ be a Cayley graph of infinite valency on a finitely-generated infinite abelian group.
If $G$ is Hamilton-decomposable, then clearly it is connected. For the converse, suppose $G$ is connected.
Since $G$ is a Cayley graph it is vertex-transitive and so by Theorem 
\ref{Thm_Nash_Williams_ham_path} it has a Hamilton path. 
Thus, $G$ is $\infty$-connected by Theorem \ref{Lemma_inf_vert_trans}, and hence has 
infinite edge-connectivity.  
So $G$ is Hamilton-decomposable by Theorem \ref{Lemma_inf_ham_decomp}.
\end{proof}

Since an infinite circulant graph with infinite connection set is admissible if and only if it is connected,
Theorem \ref{Thm: inf S} answers Problem \ref{mainproblem} in the affirmative for the case of infinite connection sets.
For the remainder of this paper, we consider the case where the connection set is finite.  


\section{Infinite $4$-Valent Circulant Graphs \label{sec:4Reg}}

In this section we prove that all admissible $4$-valent infinite circulant graphs are Hamilton-decomposable, thus establishing the following theorem.

\begin{theorem} \label{Thm_ab}
A $4$-valent infinite circulant graph is Hamilton-decomposable if and only if it is admissible.
\end{theorem}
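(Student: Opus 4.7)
The forward direction is immediate from Lemma~\ref{NecCondition}. For the converse, let $S=\pm\{a,b\}$ be admissible with $0<a<b$. From admissibility ($\gcd(a,b)=1$ with $a+b$ even) I deduce that both $a$ and $b$ are odd. My plan is to exhibit two edge-disjoint two-way-infinite Hamilton paths $H_1,H_2$ in $G=\Cay(\mathbb{Z},S)$ whose union is $E(G)$.

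I would specify each $H_i$ as a periodic two-way-infinite walk $\dots, v_{-1},v_0,v_1,\dots$ with $v_i-v_{i-1}=z_i\in\{\pm a,\pm b\}$ and $z_{i+p}=z_i$ for some fixed period $p$. Such a walk is a two-way-infinite Hamilton path precisely when $\sum_{i=1}^p z_i = p$ and the first $p$ vertices form a complete set of residues modulo $p$; in that case every integer appears exactly once, by a straightforward translation-by-$p$ argument. The first step is thus to construct, for each admissible $(a,b)$, a step pattern yielding such a walk.

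In the ``consecutive odd'' case $b=a+2$, the pattern $(b,-a)$ works: $p=2$, and $v_0=0,\,v_1=b$ are distinct mod $2$. One checks that $H_1$ is a Hamilton path whose length-$a$ and length-$b$ edges are exactly those $\{n,n+a\}$ and $\{n,n+b\}$ with $n$ even. Taking $H_2=H_1+1$ yields a second Hamilton path containing precisely the remaining edges (those with $n$ odd), so $E(H_1)\cup E(H_2)=E(G)$ and $H_1,H_2$ are edge-disjoint.

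For $b>a+2$, I would try analogous periodic patterns with longer period. When $(a+1)\mid(b-1)$ a natural choice is $(b,-a,-a,\dots,-a)$ with $(b-1)/(a+1)$ copies of $-a$, giving $p=c=(b-1)/(a+1)+1$; one checks this makes $H_1$ a Hamilton path. For the remaining admissible pairs a more intricate pattern will be needed, perhaps mixing $+b$ with $-b$ steps or using B\'ezout coefficients of $a$ and $b$ to engineer the required period and shift. The hard part will be defining $H_2$: the naive choice $H_2=H_1+j$ is typically not edge-disjoint from $H_1$ (translates fix certain residue structures), while the honest complement $E(G)\setminus E(H_1)$ usually splits into several two-way-infinite paths rather than a single one, as can be checked already for $(a,b)=(1,5)$ with the three-step pattern above. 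Overcoming this obstruction will either require selecting $H_1$ so that its complement is automatically connected (imposing strong constraints on the step pattern), or performing a finite set of local edge-swaps that merge the components of $E(G)\setminus E(H_1)$ into a single Hamilton path while preserving $H_1$ as a Hamilton path. A case split, for instance according to whether $(a+1)\mid(b-1)$, and further by the residues of $a,b$ modulo small integers, is likely unavoidable, and verifying the construction in each case will be the bulk of the argument; the residue analysis ensuring simultaneous connectedness of $H_1$ and $H_2$ is the crux.
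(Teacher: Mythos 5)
Your reduction to the converse direction, the observation that admissibility forces $a$ and $b$ both odd, and the general framework (a periodic step pattern giving a Hamilton path, with a second path obtained so that the two edge sets partition $E(G)$) are all sound, and your treatment of the case $b=a+2$ is complete and correct. But for every admissible pair with $b>a+2$ the proposal stops at a plan: you name candidate patterns, correctly observe that the one you write down fails already for $(a,b)=(1,5)$ (its complement is disconnected), and then defer the actual construction and verification to ``the bulk of the argument.'' That unresolved part is precisely the content of the theorem, so as it stands there is a genuine gap: no construction is given for general admissible $(a,b)$, and no argument is given that any construction with the required properties exists.

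For comparison, the paper closes exactly this gap by choosing the base path more carefully: it builds a path $P$ from $0$ to $2b$ with $2b$ edges whose edge lengths \emph{alternate} between $a$ and $b$ (assembled from blocks $\Omega_v(a,b)$ and $\Omega_v(a,-b)$ arranged according to the residues of multiples of $a$ modulo $t=b-a$). Since $\gcd(a,b)=1$ and the lengths alternate, $P$ meets each residue class modulo $2b$ exactly once, so $H_1=\bigcup_{i\in\mathbb Z}(P+2bi)$ is a Hamilton path; and because the length-$a$ and length-$b$ edges of $P$ sit at explicitly controlled parities, the translate $H_2=H_1+b$ (with $b$ odd) is edge-disjoint from $H_1$ and completes the decomposition. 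So the ``translate'' idea you dismissed as typically failing does work, provided the base path is engineered with the right parity structure --- that engineering is the missing step in your argument.
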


\begin{proof}  Let $G = \Cay(\mathbb{Z},\pm\{a,b\})$ be a $4$-valent admissible infinite circulant graph.  Thus $a$ and $b$ are distinct non-zero integers, and we may assume that $1 \leq a < b$.  Since $G$ is admissible, $\gcd(a,b) = 1$ and $a+b$ is even. Thus, $a$ and $b$ are both odd.
Let $t = b-a$, and define $m$ to be the integer in $\{0,1,\dots, t-1\}$ such that $m \equiv a \smod t$.  Note that $t$ is even and $\gcd(m, t) = 1$.  
For convenience, we use $b$ and $a+t$ interchangeably.

For each $i\in\{0,2,...,t-2\}$, define $\alpha_i$ to be the integer in $\{0,1,\ldots,t-1\}$ such that $\alpha_i\equiv im\smod t$, and define $\alpha_t=t$.
Let $F_v := \Omega_v(a,b)$ and $B_v := \Omega_v(a,-b)$.  Note that $F_v$ and $B_v$ correspond to paths $[v,v+a, v+2a+t]$ and $[v,v+a,v-t]$.
Now define $P$ as follows:
\[ P = \left( \bigcup_{i\in\{0,2,\ldots,t-2\}} F_{\alpha_i} \cup B_{2a+\alpha_i + t} \cup B_{2a+\alpha_i} \cup B_{2a+\alpha_i -t} \cup \dots \cup B_{t+\alpha_{(i+2)}} \right) \cup F_t, \]
see Figure \ref{fig_13_35_57}. It is straightforward to check that $P$ is a path with endpoints $0$ and $2b$ having $2b$ edges.
Since $\gcd(a,b) = 1$ and the lengths of the edges of $P$ alternate between $a$ and $b$, it follows that $P$ has exactly one vertex from each congruence class modulo $2b$ (except that the endpoints are both $0 \smod{2b}$).  
Hence, $H_1 =  \bigcup_{i \in \mathbb{Z}} \left( P+2bi \right)$ is a Hamilton path in $G$.  

The second Hamilton path is $H_2 =  \bigcup_{i \in \mathbb{Z}}\left( Q+2bi \right)$, where $Q = P+b$.  To see that $H_1$ and $H_2$ are edge-disjoint, it suffices to show that $E(Q)$ is disjoint from both $E(P)$ and $E(P+2b)$.  Observe that the edge set of $P$ is:
\begin{eqnarray}
E(P) &=& \{ \{x,x+a\} \mid x \textrm{ is even and } 0 \le x \le 2a+2t-2 \} \nonumber \nonumber \\ \nonumber 
	& & \cup \{ \{x,x+b\} \mid (x \textrm{ is even and } 2 \le x \le 2a+t-2) \textrm{ or } \\ \nonumber 
	& & \hspace{2.7cm}  (x \textrm{ is odd and } a \le x \le a+t)\}. \nonumber
\end{eqnarray} 

Since $E(Q) = \{ \{x+b, y+b\} \mid \{x,y\} \in E(P) \}$, where $b$ is an odd integer, it follows that
\begin{eqnarray}
E(Q) &=& \{ \{x,x+a\} \mid x \textrm{ is odd and } a+t \le x \le 3a+3t-2 \} \nonumber \nonumber \\ \nonumber 
	& & \cup \{ \{x,x+b\} \mid (x \textrm{ is odd and } a+t+2 \le x \le 3a+2t-2) \textrm{ or } \\ \nonumber
	& & \hspace{2.7cm} (x \textrm{ is even and } 2a+t \le x \le 2a+2t)\}. \nonumber
\end{eqnarray}
Thus, $E(Q) \cap E(P) = \emptyset$.  Similarly, $ E(Q)  \cap E(P+2b) = \emptyset$ and hence $H_1$ and $H_2$ form a Hamilton decomposition of $G$.  The result now follows by Lemma \ref{NecCondition}.

\begin{figure}[h!]
\begin{center}
\includegraphics[scale=1.4]{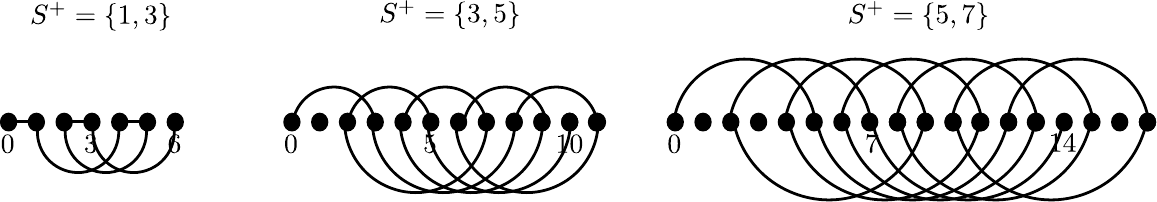}
\caption{Example constructions from Theorem \ref{Thm_ab} when $b-a = 2$.}
\label{fig_13_35_57}
\end{center}
\end{figure}
\end{proof}

\section{Infinite $2k$-Valent Circulant Graphs \label{sec:2kReg}}

In this section we consider $2k$-valent infinite circulant graphs for $k \geq 3$, proving that there are many infinite families of such graphs that are Hamilton decomposable if and only if they are admissible.

We begin by considering graphs $\Cay(\mathbb{Z}, S)$ where $|S^+| = k$ and $k \in S^+$ and no other element of $S^+$ is divisible by $k$.  An example of such a graph is $\Cay(\mathbb{Z}, \pm \{1,5,11,12,14\})$.  The next lemma, Lemma \ref{Lemma_suitable_path}, shows that in order to find a Hamilton decomposition of an admissible infinite circulant graph of this type, it suffices to find a Hamilton path in the complete graph with vertex set $\mathbb Z_k$ with edges of appropriate lengths. In the following lemma and its proof, the notation $[x]$ denotes the congruence class of $x$ modulo $k$.

\begin{lemma}\label{Lemma_suitable_path}
Suppose $k\geq 3$ is odd and $a_1,\dots, a_{k-1}$ are distinct positive integers which are not divisible by $k$ such that $G = \Cay(\mathbb{Z}, \pm\{a_1, \dots, a_{k-1},k\})$ is an admissible infinite circulant graph.  
If there exists a Hamilton path $Q$ in the complete graph with vertex set $\mathbb Z_k$ 
where the multiset 
$\{\ell(a,b)\mid\{a,b\}\in E(Q)\}$ equals the multiset
$\{\ell([0],[a_i])\mid i=1,2,\ldots,k-1\}$, then $G$ is Hamilton-decomposable.
\end{lemma}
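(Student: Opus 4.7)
The plan is to build one Hamilton path $H_0$ by lifting $Q$ to $\mathbb{Z}$ and threading translates of the lift together with length-$k$ edges, and then to obtain the other $k-1$ paths as the \emph{even} translates $H_j := H_0 + 2j$ for $j = 1,\ldots,k-1$. Taking even shifts (rather than all shifts $0,1,\ldots,k-1$) is forced, and it is exactly what the admissibility condition plus $k$ odd will deliver at the very end.

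After assuming WLOG that $c_0 = 0$, the multiset hypothesis lets me pair each edge $\{c_{i-1},c_i\}$ of $Q$ with a unique generator $a_{\sigma(i)}$ of matching length, and then pick signs $\epsilon_i \in \{\pm 1\}$ so that $\tilde{\delta}_i := \epsilon_i a_{\sigma(i)}$ satisfies $\tilde{\delta}_i \equiv c_i - c_{i-1} \pmod k$. I then define the template $P := [v_0,v_1,\ldots,v_{k-1}]$ by $v_0 = 0$ and $v_i = v_{i-1} + \tilde{\delta}_i$; since $v_i \equiv c_i \pmod k$, the template $P$ meets each residue class mod $k$ exactly once. Set $D := v_{k-1} = \sum_i \tilde{\delta}_i$.

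The Hamilton path $H_0$ is assembled cycle by cycle: the $m$th cycle runs forward through $P + 2mk$ (from $2mk$ to $2mk + D$), takes a length-$k$ edge to $(2m+1)k + D$, runs backward through $P + (2m+1)k$ (ending at $(2m+1)k$), and takes another length-$k$ edge to $2(m+1)k$; all four endpoints match by inspection. To check that $H_0$ is a Hamilton path, I will observe that its forward-segment vertices realise the residues $\{v_i \bmod 2k\}$ while its reverse-segment vertices realise $\{(v_i+k) \bmod 2k\}$. These two $k$-element sets are disjoint---an equality $v_i \equiv v_j + k \pmod{2k}$ would force $v_i \equiv v_j \pmod k$, hence $i = j$ and $k \equiv 0 \pmod{2k}$, absurd---so together they partition $\mathbb{Z}_{2k}$ and every integer is visited exactly once.

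For edge-disjointness and coverage, I will show that for every length $\ell \in \{a_1,\ldots,a_{k-1},k\}$ the pair of lower endpoints (mod $2k$) of $H_0$'s two length-$\ell$ edges per cycle is of the form $\{r, r+k\}$ with $r$ and $r+k$ of opposite parity. When $\ell = a_{\sigma(i)}$ this is immediate, since the forward and reverse length-$a_{\sigma(i)}$ edges in a single cycle differ by a shift of $k$, and $k$ is odd. When $\ell = k$ the two lower endpoints are $D \bmod 2k$ and $k \bmod 2k$; admissibility forces $\sum_i a_i$ to be even, hence $D$ is even, whereas $k$ is odd. Now adding the even shifts $\{0,2,4,\ldots,2(k-1)\}$ to any opposite-parity pair $\{r,r+k\} \subset \mathbb{Z}_{2k}$ sweeps out all of $\mathbb{Z}_{2k}$, so each length-$\ell$ edge of $G$ is covered exactly once by exactly one $H_j$. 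The main obstacle is arranging matters so both checks succeed simultaneously---that $H_0$ is actually Hamilton \emph{and} that the lower-endpoint pairs are of opposite parity for \emph{every} length. The construction works precisely because $k$ odd is used twice: once to separate $v_i$ from $v_i + k$ in parity for the length-$a_{\sigma(i)}$ edges, and once to play against the admissibility-forced even value of $D$ for the length-$k$ edges.
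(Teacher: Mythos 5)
Your proposal is correct and follows essentially the same route as the paper: lift $Q$ to a path $P_1=\Omega_0(b_1,\dots,b_{k-1})$, thread $P_1$ and $P_1+k$ together with two length-$k$ edges into a $2k$-edge template, tile $\mathbb{Z}$ by $2k$-translates to get one Hamilton path, and take the $k$ even translates, with edge-disjointness secured exactly as you describe by the opposite parities of the two lower endpoints of each edge length (using $k$ odd and the admissibility-forced evenness of $\Sigma=D$). No gaps.
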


\begin{proof} Let $Q = [v_0,v_1,\dots,v_{k-1}]$ be a Hamilton path in the complete graph with vertex set $\mathbb Z_k$ such that the multiset 
$\{\ell(a,b)\mid\{a,b\}\in E(Q)\}$ equals the multiset
$\{\ell([0],[a_i])\mid i=1,2,\ldots,k-1\}$. Thus, there exist integers 
$b_1,b_2,\ldots,b_{k-1}$ such that $[b_i]=[v_i]-[v_{i-1}]$ for $i=1,2,\ldots,k-1$ and 
$\pm\{b_1,b_2,\ldots,b_{k-1}\}=\pm\{a_1,a_2,\ldots,a_{k-1}\}$.
Define the path $P_1$ in $G$ by $P_1 = \Omega_0(b_1, b_2, \dots, b_{k-1})$.

Let $\Sigma = \sum_{i=1}^k b_i$ and note that $P_1$ is a path in $G$ with endpoints $0$ and $\Sigma$, and that $P_1$ has exactly one vertex from each congruence class modulo $k$.  Similarly, $P_1+k$ is a path in $G$ with endpoints $k$ and $k + \Sigma$, $P_1+k$ has exactly one vertex from each congruence class modulo $k$, and $P_1+k$ is disjoint from $P_1$.  

Now define $P$ to be the path in $G$ with edge set 
\[ E(P_1) \cup E(P_1 + k) \cup \{ \{\Sigma, \Sigma+k\}, \{k, 2k\} \}. \]

Thus, $P$ is a path with endpoints $0$ and $2k$ having $2k$ edges and having exactly one vertex from each congruence class modulo $2k$ (except that the the endpoints are both $0 \smod{2k}$).  Thus, \[H_1 = \bigcup_{i \in \mathbb{Z}} (P + 2ki)\] is a Hamilton path in $G$. 

Recall that $k$ is odd, so $\Sigma$ is even by Lemma \ref{NecCondition}.  The length $k$ edges in $P$ are $\{\Sigma, \Sigma + k\}$ and $\{k, 2k\}$, and the edges of length $a_i$ in $P$ are $\{x, x+a_i\}$ and $\{k+x, k+x+a_i\}$ for some integer $x$, for each $i = 1, \dots, k-1$.   
Thus, for each $d \in \{a_1,a_2,\dots,a_{k-1},k\}$, the set of length $d$ edges in $H_1$ is $\{ \{x,x+d\} \mid x \equiv x_1,x_2 \smod{2k}\}$ where $x_1$ is odd and $x_2$ is even and hence the length $d$ edges of each of $H_1,H_1+2,\dots, H_1+2k-2$ are mutually disjoint.
Thus $\{H_1, H_1+2, \dots, H_1+2k-2\}$ is a Hamilton decomposition of $G$.
\end{proof}

Buratti's Conjecture (see \cite{HR}) addresses the existence of paths with edges of specified lengths as required in Lemma \ref{Lemma_suitable_path}. It states that if $p$ is an odd prime and $L$ is a multiset containing $p-1$ elements from 
$\{1,2,\ldots,\frac{p-1}2\}$, then there exists a Hamilton path in the complete graph with vertex set $\mathbb Z_p$ such that the lengths of the edges of the path comprise the multiset $L$.
Buratti's Conjecture is open in general and, as noted by Horak and Rosa \cite{HR}, does not appear to be easy to solve.  By Lemma \ref{Lemma_suitable_path}, progress on Buratti's Conjecture can provide further constructions of Hamilton decompositions of admissible infinite circulant graphs.  Since the conjecture has been verified for $p \leq 23$ by Mariusz Meszka (see \cite{HR} for example), we have the following 
result.

\begin{theorem}\label{Cor_k_small_odd_prime}
If $p$ is an odd prime, where $p \leq 23$, and $a_1, a_2, \dots, a_{p-1}$ are distinct positive integers, not divisible by $p$, then $\Cay(\mathbb{Z}, \pm\{a_1, a_2, \dots, a_{p-1}, p\})$ is Hamilton-decomposable if and only if it is admissible.
\end{theorem}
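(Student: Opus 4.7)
The plan is to combine Lemma \ref{Lemma_suitable_path} with the (verified) instances of Buratti's Conjecture.  The forward implication is free: by definition, any Hamilton-decomposable infinite circulant graph is admissible, since (i) and (ii) of Lemma \ref{NecCondition} hold.  So the real content is the reverse implication, which I would attack as follows.

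Assume $G = \Cay(\mathbb{Z}, \pm\{a_1, a_2, \dots, a_{p-1}, p\})$ is admissible, and consider the multiset
\[
L = \{\ell([0],[a_i]) \mid i = 1,2,\ldots,p-1\}
\]
of edge lengths in $K_{\mathbb{Z}_p}$ determined by the connection set.  The first step is to check that $L$ fits the hypothesis of Buratti's Conjecture: it has exactly $p-1$ elements, and because each $a_i$ is not divisible by $p$ the class $[a_i]$ is nonzero in $\mathbb{Z}_p$, so each $\ell([0],[a_i])$ lies in $\{1,2,\ldots,\frac{p-1}{2}\}$.  This verification is immediate from the definition of length in a circulant graph on $\mathbb{Z}_p$.

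With $L$ in hand, I would invoke Buratti's Conjecture for the prime $p$ in question, which Meszka has verified for all odd primes $p\leq 23$.  This produces a Hamilton path $Q$ in the complete graph with vertex set $\mathbb Z_p$ whose edge-length multiset equals $L$.  Applying Lemma \ref{Lemma_suitable_path} to this $Q$ then yields a Hamilton decomposition of $G$, completing the reverse implication.

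There is no real obstacle in this proof beyond correctly matching the objects: the potential pitfalls are just bookkeeping — confirming that ``not divisible by $p$'' guarantees $[a_i]\neq[0]$ (so $L$ really lives in $\{1,\ldots,\frac{p-1}{2}\}$), and that $p\geq 3$ is odd so that Lemma \ref{Lemma_suitable_path} applies with $k=p$.  The substantive mathematical work has already been done in Lemma \ref{Lemma_suitable_path} and in Meszka's computational verification of Buratti's Conjecture for small primes; the theorem itself is essentially a dictionary entry translating between the two.
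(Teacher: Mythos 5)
Your proposal is correct and follows exactly the paper's route: the paper derives this theorem immediately from Lemma \ref{Lemma_suitable_path} together with Meszka's verification of Buratti's Conjecture for odd primes $p\leq 23$, which is precisely the combination you describe. The bookkeeping you flag (that ``not divisible by $p$'' forces each $\ell([0],[a_i])$ into $\{1,\ldots,\frac{p-1}{2}\}$, so the multiset meets the hypothesis of Buratti's Conjecture) is the only detail needed, and you handle it correctly.
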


Several cases and generalisations of Buratti's Conjecture have been studied in \cite{CF, DJ, HR}.  If $k$ is odd but not prime, it is not always possible to find a Hamilton path in $K_k$ with specified edge lengths.  For instance, there is no Hamilton path in $K_9$ where the multiset of edge lengths is one of the following: $\{1,3,3,3,3,3,3,3\}$, $\{2,3,3,3,3,3,3,3\}$, $\{3,3,3,3,3,3,3,3\}$ and $\{3,3,3,3,3,3,3,4\}$ \cite{DJ}.  


We note that when $k\geq 3$ is odd $Q= [0,1,k-1,2,k-2,3,k-3, \dots, \frac{k-1}{2}, \frac{k+1}{2}]$ is a Hamilton path in the complete graph with vertex set $\mathbb Z_k$ and the lengths of the edges of $Q$ comprise the multiset $\{1,1,2,2, \dots, \frac{k-1}{2}, \frac{k-1}{2} \}$. Thus, Lemma 10 immediately gives us the following result.


\begin{theorem}\label{WaleckiConstruction}
If $k\geq 3$ is odd and $a_1, \dots, a_{k-1}$ are distinct positive integers such that, for each $i=1,2,\dots,k-1$, $a_i \equiv i \smod{k}$, then $\Cay(\mathbb{Z}, \pm \{a_1, \dots, a_{k-1}, k\})$ is Hamilton-decomposable if and only if it is admissible.
\end{theorem}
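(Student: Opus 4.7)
The plan is to deduce the theorem directly from Lemma \ref{Lemma_suitable_path}, using the zigzag Hamilton path $Q = [0,1,k-1,2,k-2,\ldots,\frac{k-1}{2},\frac{k+1}{2}]$ on the vertex set $\mathbb Z_k$ exhibited just before the statement. The ``only if'' direction is immediate from Lemma \ref{NecCondition}. For the ``if'' direction, suppose $G = \Cay(\mathbb Z, \pm\{a_1,\ldots,a_{k-1},k\})$ is admissible; since $a_i \equiv i \smod k$ with $1 \le i \le k-1$, no $a_i$ is divisible by $k$, so all hypotheses of Lemma \ref{Lemma_suitable_path} are in force.

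Next I would check that the two multisets appearing in Lemma \ref{Lemma_suitable_path} coincide. Because $a_i \equiv i \smod k$, we have $\ell([0],[a_i]) = \min(i,k-i)$; as $i$ ranges over $\{1,\ldots,k-1\}$ and $k$ is odd (so $i \ne k-i$ throughout), the pairs $\{i,k-i\}$ partition this range, giving the target multiset $\{1,1,2,2,\ldots,\frac{k-1}{2},\frac{k-1}{2}\}$.

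A direct inspection of consecutive vertex pairs along $Q$ shows that the $j$-th edge of $Q$ has length $j$ for $j = 1, \ldots, \frac{k-1}{2}$ (as can be seen from the explicit form of the vertices on the ascending arc), and by the symmetry of the path the remaining edges have lengths $\frac{k-1}{2}, \frac{k-3}{2}, \ldots, 2, 1$ in that order. Combining these gives edge-length multiset $\{1,1,2,2,\ldots,\frac{k-1}{2},\frac{k-1}{2}\}$, matching the target exactly. Lemma \ref{Lemma_suitable_path} then produces the required Hamilton decomposition of $G$.

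There is no substantive obstacle in this argument: the content of the theorem is essentially the observation that the classical Walecki-style zigzag Hamilton path on $\mathbb Z_k$ carries precisely the edge-length multiset needed to invoke Lemma \ref{Lemma_suitable_path}, and the only routine verification is to confirm the edge-length pattern of $Q$ and match multiplicities.
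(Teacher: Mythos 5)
Your proposal is correct and is essentially identical to the paper's argument: the paper also notes that the zigzag path $Q=[0,1,k-1,2,k-2,\dots,\frac{k-1}{2},\frac{k+1}{2}]$ has edge-length multiset $\{1,1,2,2,\dots,\frac{k-1}{2},\frac{k-1}{2}\}$ and then deduces the theorem immediately from Lemma \ref{Lemma_suitable_path}. Your verification of the length pattern and of the matching multiset $\{\min(i,k-i)\}$ is accurate.
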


We now address Hamilton-decomposability of $\Cay(\mathbb{Z}, \pm\{1,2,3,\dots,k\})$.

\begin{theorem} \label{ThmConsec}  If $k$ is any positive integer, then 
$\Cay(\mathbb{Z}, \pm\{1,2,3,\dots,k\})$ is Hamilton-decomposable if and only if it is admissible.
\end{theorem}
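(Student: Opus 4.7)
The plan is to split into cases based on the parity of $k$, reducing each case to Theorem \ref{WaleckiConstruction}. Necessity is immediate from Lemma \ref{NecCondition}; a quick calculation shows admissibility of $\Cay(\mathbb{Z}, \pm\{1, 2, \ldots, k\})$ is equivalent to $k(k-1) \equiv 0 \pmod{4}$, i.e.\ to $k \equiv 0$ or $1 \pmod 4$. In the odd case, Theorem \ref{WaleckiConstruction} applies essentially verbatim: for $k \geq 3$ odd, take $a_i := i$ for $i = 1, \ldots, k-1$, which trivially satisfies $a_i \equiv i \pmod k$. (If $k = 1$, the graph $\Cay(\mathbb{Z}, \pm\{1\})$ is itself a two-way-infinite Hamilton path, giving a trivial decomposition.)

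For even $k$, admissibility forces $k \geq 4$. The idea is to peel off the length-$1$ edges and reduce to an odd case of Theorem \ref{WaleckiConstruction}. Specifically, decompose
\[
\Cay(\mathbb{Z}, \pm\{1, 2, \ldots, k\}) \;=\; \Cay(\mathbb{Z}, \pm\{1\}) \;\cup\; \Cay(\mathbb{Z}, \pm\{2, 3, \ldots, k\}),
\]
which is edge-disjoint since the two factors use disjoint length sets. The first factor is itself a two-way-infinite Hamilton path. For the second, I would apply Theorem \ref{WaleckiConstruction} with $k' := k-1$ (which is odd and $\geq 3$), setting $a_1 := k$ and $a_i := i$ for $2 \leq i \leq k-2$. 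Since $k \equiv 1 \pmod{k-1}$ and each $i < k-1$ satisfies $i \equiv i \pmod{k-1}$, these residues modulo $k-1$ are exactly $1, 2, \ldots, k-2$ as required, and the connection set $\pm\{a_1, \ldots, a_{k-2}, k-1\}$ equals $\pm\{2, 3, \ldots, k\}$. Admissibility of $\pm\{2, \ldots, k\}$ is inherited from that of $\pm\{1, \ldots, k\}$: removing the element $1$ decreases both $\sum_{a \in S^+} a$ and $|S^+|$ by $1$, preserving the congruence in Lemma \ref{NecCondition}(ii). Theorem \ref{WaleckiConstruction} then supplies $k-1$ edge-disjoint Hamilton paths, and combining these with the length-$1$ path yields a decomposition of $\Cay(\mathbb{Z}, \pm\{1, 2, \ldots, k\})$ into $k$ Hamilton paths.

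The main ``obstacle''---if one can call it that---is noticing the reduction in the even case: once one observes that $\{2, 3, \ldots, k-2, k\}$ occupies residue classes $1, 2, \ldots, k-2$ modulo $k-1$ (with $k$ playing the role of the residue $1$), everything follows mechanically from Theorem \ref{WaleckiConstruction} without any further explicit construction of infinite paths.
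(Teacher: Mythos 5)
Your proof is correct, and for the case $k \equiv 0 \pmod 4$ it takes a genuinely different and shorter route than the paper. The paper also disposes of $k=1$ trivially and of $k \equiv 1 \pmod 4$, $k \geq 5$ via Theorem \ref{WaleckiConstruction} with $a_i = i$, exactly as you do; but for $k \equiv 0 \pmod 4$ it does not reduce to an earlier result. Instead it builds an explicit starter path $P$ with $2k$ edges (a concatenation of zig-zag segments $\Omega_0(-1,2)$, $\Omega_1(k-2,-(k-3),\dots)$, etc.), verifies that $H_1 = \bigcup_{i\in\mathbb{Z}}(P+2ki)$ is a Hamilton path, and checks that each edge length occurs once at an even and once at an odd starting vertex so that $H_1, H_1+2,\dots,H_1+2k-2$ are pairwise edge-disjoint. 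Your observation that $\Cay(\mathbb{Z},\pm\{1,\dots,k\})$ splits as $\Cay(\mathbb{Z},\pm\{1\})$ (itself a Hamilton path) together with $\Cay(\mathbb{Z},\pm\{2,\dots,k\})$, and that the latter is covered by Theorem \ref{WaleckiConstruction} with modulus $k-1$ since $\{2,\dots,k-2,k\}$ hits the residues $1,\dots,k-2$ modulo $k-1$, avoids the explicit construction entirely; the admissibility bookkeeping (the congruence of Lemma \ref{NecCondition}(ii) is preserved on deleting $1$, and $\gcd(2,\dots,k)=1$) is also right. What the paper's approach buys is a self-contained starter path for this family; what yours buys is brevity and a reusable reduction trick (peeling off $\pm\{1\}$ to shift the modulus by one), which is essentially the same parity dodge the paper later deploys when it treats $\pm\{1,\dots,k-1,k+1\}$ separately in Theorem \ref{ThmConsec_skipk}.
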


\begin{proof}  Let $G = \Cay(\mathbb{Z}, \pm\{1,2,\dots,k\})$.  Observe that $G$ is admissible if and only if $k \equiv 0,1 \smod{4}$.  If $k=1$, then the result is trivial, and if $k\geq 5$ with $k \equiv 1 \pmod{4}$, then $G$ is Hamilton-decomposable by Theorem \ref{WaleckiConstruction}. Thus, we can assume 
$k \equiv 0 \smod{4}$.  
If $k = 4$, then with $P = [0,-1,1,5,2,3,6,4,8]$, $H_1 = \cup_{i \in \mathbb{Z}} (P + 8i)$ is a Hamilton path in $G$ and $\{H_1, H_1+2, H_1+4, H_1 + 6 \}$ is a Hamilton decomposition of $G$.  

Now suppose $k \geq 8$ with $k \equiv 0 \smod{4}$ 
and let $u = \frac{k}{2}$ and $v = \frac{3k}{2}$. Define a path $P$ with $2k$ edges as follows.
\begin{align*} 
P := & \hspace{4mm} \Omega_0(-1, 2) \\
    & \cup  \Omega_1(k-2, -(k-3), k-4, -(k-5), \dots, 4,-3) \\
	& \cup \Omega_{u-1}(k, -(k-1), 1, k-1, -2) \\
	& \cup \Omega_{v-2}(3, -4, 5, -6, \dots, k-3, -(k-2))\\
	& \cup \Omega_{k} (k),
\end{align*}
so that 
\begin{align*}
E(P) = & \hspace{0.4cm} \{ \{x, x+1\}, \mid x = -1, u \} \\
	 & \cup \{ \{x, x+2\} \mid x = -1,v-2 \}  \\
	 & \cup \{ \{x, x+d\} \mid x = u-\lfloor d \slash 2 \rfloor, v-\lfloor  d \slash 2 \rfloor -1 \textrm{ where } d = 3, \dots, k-2 \} \\
	 & \cup \{ \{x, x+k-1\} \mid x = u, u+1\} \\
	 & \cup \{ \{x, x+k\}, \mid x = u-1, k \}. 
\end{align*}

Figure \ref{fig_k4andk8} shows this construction for $k = 4$ and $k=8$.
\begin{figure}[h!]
\begin{center}
\includegraphics[scale=1.4]{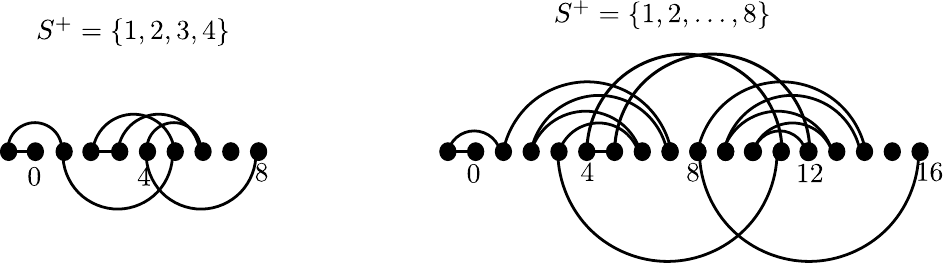}
\caption{Construction of $P$ from Theorem \ref{ThmConsec} when $k=4$ and $k = 8$.}
\label{fig_k4andk8}
\end{center}
\end{figure}

It is straightforward to check that $H_1 = \bigcup_{i \in \mathbb{Z}} (P + 2ki)$ is a Hamilton path in $G$. 
Observe that, for each $d \in \{1,2,\dots,k\}$, the path $P$ has two edges of the form $\{x,x+d\}$, one where $x$ is even and one where $x$ is odd (since $u$ and $v$ are even).  Now it can be checked that $\{H_1, H_1+2, H_1+4, \dots, H_1 + 2k-2 \}$ is a Hamilton decomposition of $G$.
\end{proof}

Although $\Cay(\mathbb{Z}, \pm\{1,2,3,\dots,k\})$ is not admissible when $k \equiv 2,3 \pmod{4}$, the graph $\Cay(\mathbb{Z}, \pm \{1,2,3,\dots,k-1,k+1\})$ is admissible, and we next show that it is also Hamilton-decomposable.

\begin{theorem} \label{ThmConsec_skipk}  If $k$ is a positive integer, then $\Cay(\mathbb{Z}, \pm \{1,2,3,\dots,k-1,k+1\})$ is Hamilton-decomposable if and only if it is admissible.
\end{theorem}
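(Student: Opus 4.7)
The plan is to follow the same template as Theorem \ref{ThmConsec}: construct, for each admissible $k$, a finite path $P$ from $0$ to $2k$ with $2k$ edges satisfying
\begin{itemize}
\item[(a)] every length in $\{1, 2, \ldots, k-1, k+1\}$ is used by exactly two edges of $P$;
\item[(b)] the $2k+1$ vertices of $P$ represent each residue class modulo $2k$ exactly once, except that the class of $0$ is represented by both endpoints; and
\item[(c)] for every $d \in \{1, 2, \ldots, k-1, k+1\}$, the two $d$-edges of $P$ have smaller endpoints of opposite parities.
\end{itemize}
Then $H_1 = \bigcup_{i \in \mathbb{Z}}(P + 2ki)$ is a Hamilton path of $G = \Cay(\mathbb{Z}, \pm\{1, 2, \ldots, k-1, k+1\})$ by (a) and (b), and the translates $H_1, H_1 + 2, \ldots, H_1 + 2k-2$ form a Hamilton decomposition of $G$ by (c), by the same argument as at the end of the proof of Theorem \ref{ThmConsec}. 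The necessity direction is immediate from Lemma \ref{NecCondition}.

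First I would handle admissibility and small cases. Since $1 \in S^+$ whenever $k \geq 2$, condition (i) of Lemma \ref{NecCondition} is automatic, and a short computation gives $\sum_{a \in S^+} a - |S^+| = \tfrac{k(k-1)}{2} + 1$, which is even iff $k \equiv 2$ or $3 \pmod 4$. The case $k = 1$ gives $\Cay(\mathbb{Z}, \pm\{2\})$, which is disconnected and inadmissible, and the case $k = 2$ gives $\Cay(\mathbb{Z}, \pm\{1, 3\})$, already covered by Theorem \ref{Thm_ab}. So it remains to build $P$ for every $k \geq 3$ with $k \equiv 2, 3 \pmod 4$.

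The construction of $P$ splits according to $k \pmod 4$. For $k \equiv 3 \pmod 4$ the base case $k = 3$ is given explicitly by $P = \Omega_0(k+1, -(k-1), 1, k+1, -(k-1), 1) = [0, 4, 2, 3, 7, 5, 6]$, and properties (a)--(c) are readily verified by inspection. For larger $k \equiv 3 \pmod 4$ the connection set contains the additional lengths $2, 3, \ldots, k-2$, and $P$ is augmented with a pair of mirrored Walecki-style zigzag segments using those lengths, in analogy with the zigzag blocks that appear in the proof of Theorem \ref{ThmConsec}. For $k \equiv 2 \pmod 4$ the smallest case is $k = 6$ (with connection set $\{1, 2, 3, 4, 5, 7\}$), which has to be treated directly via an explicit $\Omega$-expression; for larger $k \equiv 2 \pmod 4$ one can build the construction around a central segment using edges of length $k+1$ in place of the length-$k$ edges used in the central block of Theorem \ref{ThmConsec}.

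The technical heart of the proof is verifying property (c) throughout the construction. In Theorem \ref{ThmConsec} the parity balance for each length $d$ is built in because the starting positions of the two $d$-edges come out with opposite parities for structural reasons tied to $k \equiv 0 \pmod 4$. Replacing length $k$ by the odd length $k+1$ shifts the smaller endpoint of each of the two affected edges by $1$ and so can flip their parities, potentially breaking the balance unless one also makes compensating sign changes elsewhere in the construction. Checking that after these adjustments all $2k+1$ vertices of $P$ remain distinct, that every residue modulo $2k$ is hit exactly once, and that property (c) holds for every length simultaneously is the main expected obstacle, and this will probably require a separate case analysis for $k \equiv 2$ and $k \equiv 3 \pmod 4$, together with explicit base cases for a small number of initial values of $k$. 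Once $P$ passes these checks the Hamilton decomposition follows at once from the translate-and-shift argument of Theorem \ref{ThmConsec}.
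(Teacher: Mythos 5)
Your framework is sound in principle --- conditions (a)--(c) on a $2k$-edge starter path $P$ would indeed yield a Hamilton decomposition by the translate argument of Theorem \ref{ThmConsec}, your admissibility computation is correct, and your $k=3$ path $[0,4,2,3,7,5,6]$ checks out. But the proof has a genuine gap: for every $k\geq 6$ with $k\equiv 2,3\pmod 4$ you never actually construct $P$. Phrases such as ``augmented with a pair of mirrored Walecki-style zigzag segments'' and ``build the construction around a central segment using edges of length $k+1$ in place of the length-$k$ edges'' describe an intention, not a construction, and you concede yourself that verifying (c) after these modifications ``is the main expected obstacle.'' Since the entire content of the theorem in this direction is the existence of such a starter path, what you have written is a plan for a proof rather than a proof. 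In particular, replacing the two length-$k$ edges of the Theorem \ref{ThmConsec} path by length-$(k+1)$ edges is not a local change: it shifts subsequent vertices, disturbs the residue-coverage condition (b) as well as the parity condition (c), and forces compensating changes whose correctness is exactly what needs to be proved.

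It is also worth noting that the paper takes a structurally simpler route which avoids your condition (c) altogether. Because none of the lengths $1,2,\dots,k-1,k+1$ is divisible by $k$, one can instead build a path $P$ with only $k$ edges, \emph{one} of each length, running from $0$ to $k$ and meeting each residue class modulo $k$ exactly once (endpoints both $\equiv 0$). Then $H_1=\bigcup_{i\in\mathbb{Z}}(P+ki)$ is a Hamilton path and, since each length occurs exactly once in $P$, the $k$ translates $H_1,H_1+1,\dots,H_1+k-1$ automatically partition the edges of each length --- no parity balancing is needed. The paper then gives explicit $\Omega$-expressions for such a $P$ in the two cases $k\equiv 2$ and $k\equiv 3\pmod 4$ (zigzag patterns whose residues modulo $k$ can be listed and checked). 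If you want to complete your argument, either supply explicit $2k$-edge paths satisfying (a)--(c) with full verification, or switch to this $k$-edge, one-edge-per-length template, which reduces the verification burden considerably.
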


\begin{proof}  Let $G = \Cay(\mathbb{Z}, \pm\{1,2,\dots,k-1,k+1\})$ and observe that $G$ is admissible if and only if $k \equiv 2,3 \smod{4}$.  If $k=2$, then the result follows by Theorem \ref{Thm_ab}.  If $k=3$, define $P =[0,1,-1,3]$ (see Figure \ref{fig_124}) and let $H_1 =  \bigcup_{i\in \mathbb{Z}}\left( P+3i \right)$ so that $ \{H_1, H_1+1, H_1+2\}$ is a Hamilton decomposition of $G$. Thus, we now assume $k \geq 6$.
\begin{figure}[h!]
\begin{center}
\includegraphics[scale=1.4]{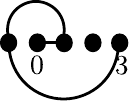}
\caption{Construction of $P$ from Theorem \ref{ThmConsec_skipk} when $k=3$.}
\label{fig_124}
\end{center}
\end{figure}

\textbf{Case 1} Suppose $k \equiv 2 \smod{4}$ and $k \geq 6$.  If $k = 6$, then with $P = [0,2,-3,-2,-5,-1,6]$, $H_1 = \cup_{i \in \mathbb{Z}} (P + 6i)$ is a Hamilton path in $G$ and $\{H_1, H_1+1, H_1+2, H_1 + 3, H_1+4, H_1+5 \}$ is a Hamilton decomposition of $G$.  Now suppose $k \geq 10$ and let $u = \frac{k}{2}$. Define a path with $k$ edges as follows:
\begin{align*}
P := & \hspace{0.4cm} \Omega_0( u-1, -(k-1))\\
 & \cup  \Omega_{-u} ( -2, 3, -4, 5, \dots, -(u-3), u-2)  \\
 & \cup \Omega_{-(u+3)/2} (1) \\
 & \cup \Omega_{-(u+1)/2} ( -u, u+1, -(u+2),u+3 \dots, -(k-3), k-2)\\  
 & \cup \Omega_{-1}(k+1).
\end{align*}
Figure \ref{fig_k6andk10} shows this construction for $k =6$ and $k = 10$.

\begin{figure}[h!]
\begin{center}
\includegraphics[scale=1.4]{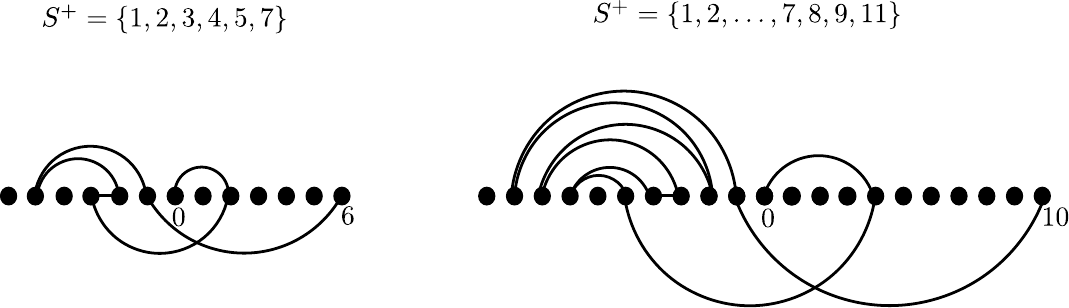}
\caption{Construction of $P$ from Theorem \ref{ThmConsec_skipk} when $k=6$ and $k = 10$.}
\label{fig_k6andk10}
\end{center}
\end{figure}

It is straightforward to check that $H_1 = \bigcup_{i \in \mathbb{Z}} (P+ki)$ is a Hamilton path in $G$ and $\{H_1, H_1+1, H_1+2, \dots, H_1+k-1\}$ is a Hamilton decomposition of $G$.

\textbf{Case 2} Suppose $k \equiv 3 \smod{4}$ and $k \geq 7$.
Let $P$ be the path with $k$ edges defined as follows:
\begin{align*} 
P := & \hspace{4mm} \Omega_0( 1)\\
 & \cup \Omega_1(k-3,5,k-7,9,k-11,13,\dots, 4, k-2)\\
  & \cup \Omega_v (-(k-1))\\
  & \cup \Omega_{v-k+1}(-2, -(k-4), -6, -(k-8), \dots, -(k-5), -3)\\
  & \cup \Omega_{-1}(k+1),
\end{align*}
where $v = (1 + 5 + 9 + \dots + k-2) + (4+8 +12 +\dots+ k-3) = \frac{k+1}{4}(k-2)$.  Figure \ref{fig_k7andk11} shows this construction for $k =7$ and $k = 11$.
\begin{figure}[h!]
\begin{center}
\includegraphics[scale=1.2]{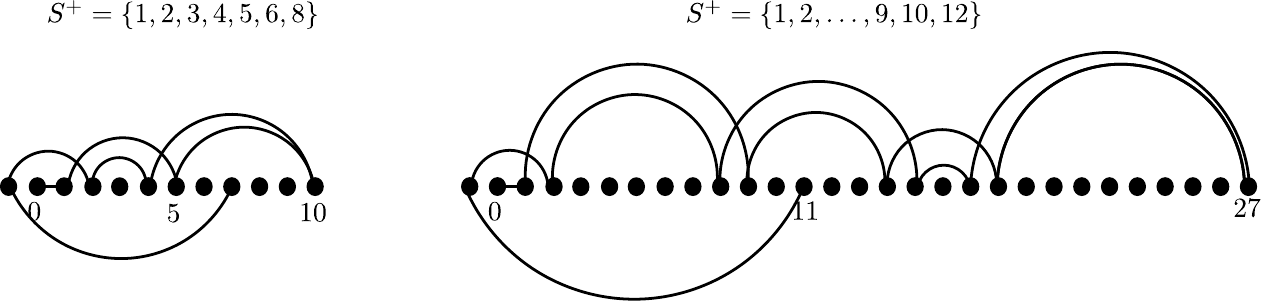}
\caption{Construction of $P$ from Theorem \ref{ThmConsec_skipk} when $k=7$ and $k = 11$.}
\label{fig_k7andk11}
\end{center}
\end{figure}

Note that, modulo $k$, the vertices of $P$ are congruent to:
\begin{align*}
 &0,1,k-2,3, k-4, 5, k-6, 7, \dots, \frac{k+3}{2}, \frac{k-1}{2},\\ 
 & \frac{k+1}{2}, \frac{k-3}{2}, \frac{k+5}{2}, \frac{k-7}{2}, \dots, k-3, 2, k-1, 0.
\end{align*}

Thus, $P$ is a path with endpoints $0$ and $k$ having exactly one vertex from each congruence class modulo $k$ (except that the endpoints are both $0 \smod{k}$).  Hence $H_1 = \bigcup_{i \in \mathbb{Z}} (P+ki)$ is a Hamilton path in $G$.  For each $d \in \{1,2,\dots, k-1,k+1\}$ the path $P$ uses exactly one edge of the form $\{x,x+d\}$.  Thus $\{H_1, H_1+1, H_1+2, \dots, H_1+k-1\}$ is a Hamilton decomposition of $G$.
\end{proof}

Next, we consider graphs $\Cay(\mathbb{Z}, S)$ where $S^+$ consists of consecutive even integers together with $1$. 

\begin{theorem} \label{Thm_consec_evens}
If $t$ is a positive integer, then $\Cay(\mathbb{Z}, \pm \{1, 2,4,6,8,\dots, 2t \})$ is Hamilton-decomposable if and only if it is admissible.
\end{theorem}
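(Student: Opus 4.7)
Since $|S^+|=t+1$ and $\sum_{a\in S^+}a=1+t(t+1)$, the parity condition in Lemma~\ref{NecCondition}(ii) reduces to $1+t(t+1)\equiv t+1\pmod 2$, which holds exactly when $t$ is even. Thus $G=\Cay(\mathbb{Z},\pm\{1,2,4,\ldots,2t\})$ is admissible iff $t$ is even, and the ``only if'' direction of the theorem follows from Lemma~\ref{NecCondition}. For the ``if'' direction I may assume $t$ is even with $t\geq 2$, and I would follow the template used in Theorem~\ref{ThmConsec_skipk}, Case~2: produce a single base path $P$ with $t+1$ edges---one of each length in $S^+$---whose $t+2$ vertices cover every residue class modulo $t+1$ exactly once, with both endpoints congruent to $0\smod{t+1}$. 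Then $H_1:=\bigcup_{i\in\mathbb{Z}}(P+(t+1)i)$ is a Hamilton path, and since each length in $S^+$ appears exactly once in $P$, the $t+1$ shifts $H_1,H_1+1,\ldots,H_1+t$ will automatically partition $E(G)$ into Hamilton paths.

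The candidate base path is the ``zig-zag''
\[
P \;:=\; \Omega_0\bigl(1,\;-2,\;+4,\;-6,\;+8,\;\ldots,\;-(2t-2),\;+2t\bigr),
\]
with alternating signs starting from $+$. A one-line induction shows that the partial sums $s_j=\sum_{i=1}^{j}e_i$ satisfy $s_{2k-1}=2k-1$ and $s_{2k}=-(2k-1)$ for $k=1,2,\ldots,t/2$, and hence that the vertex sequence of $P$ is
\[
0,\;1,\;-1,\;3,\;-3,\;5,\;-5,\;\ldots,\;t-1,\;-(t-1),\;t+1.
\]
These $t+2$ integers are pairwise distinct, so $P$ is a simple path from $0$ to $t+1$, and the sum $1+2\bigl(-1+2-3+\cdots+t\bigr)=t+1$ confirms the endpoint. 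Reducing modulo $t+1$, the odd-indexed partial sums give the odd residues $1,3,\ldots,t-1$, the even-indexed ones give the even residues $2,4,\ldots,t$ (since $-(2k-1)\equiv t+2-2k\pmod{t+1}$), and both endpoints are $\equiv 0$; thus every residue class modulo $t+1$ appears exactly once along $P$.

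Finally, the translates $P+(t+1)i$ join end-to-end at the shared endpoints, and by the residue analysis they partition $\mathbb{Z}$, so $H_1$ is a spanning connected $2$-valent subgraph of $G$, i.e.\ a Hamilton path. For each length $d\in S^+$ the unique length-$d$ edge of $P$ has a single starting vertex $x_d$, so the length-$d$ edges of $H_1+j$ are $\{x_d+j+(t+1)i,\,x_d+j+d+(t+1)i\}$ for $i\in\mathbb{Z}$; as $j$ runs over $\{0,1,\ldots,t\}$ the residue of $x_d+j$ modulo $t+1$ runs over all of $\mathbb{Z}_{t+1}$, so the $t+1$ translates are pairwise edge-disjoint and together cover every length-$d$ edge of $G$. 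Hence $\{H_1,H_1+1,\ldots,H_1+t\}$ is the desired Hamilton decomposition. The only non-routine step is guessing the zig-zag sign pattern; once that is in place, the partial-sum induction and the translation argument are straightforward, and I do not expect any serious obstacle.
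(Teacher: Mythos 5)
Your proof is correct and follows the same template as the paper's own argument: construct a starter path $P$ from $0$ to $k=t+1$ that uses each length in $S^{+}$ exactly once and meets each residue class modulo $k$ exactly once (endpoints both $\equiv 0$), set $H_1=\bigcup_{i\in\mathbb{Z}}(P+ki)$, and take the $k$ translates $H_1, H_1+1,\dots,H_1+k-1$. Your uniform zig-zag starter path $0,1,-1,3,-3,\dots,t-1,-(t-1),t+1$ differs from (and is simpler than) the paper's, which splits into the cases $k\equiv 1$ and $k\equiv 3 \pmod 4$; your verifications of vertex distinctness, residue coverage, and edge-disjointness of the translates all check out.
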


\begin{proof} 
Suppose $G = \Cay(\mathbb{Z}, \pm \{1, 2,4,6,8,\dots, 2t \})$ is admissible and let $k = |S^+| = t+1$.  Since $1+2+4+\dots+2t$ is odd, it follows that $k$ is odd and hence $t$ is even.  If $t = 2$, then the result follows by Theorem \ref{ThmConsec_skipk}.  Thus, we can assume $t \geq 4$.


Observe that the elements of $S^+$, namely $1,2,4,\dots,t-2, t, t+2, t+4, \dots 2t$, are congruent to $1,2,4,\dots,t-2, t,-t,-(t-2), \dots, -4, -2$ modulo $k$, respectively.  We define $P$ with $k$ edges as follows, depending on $k \smod{4}$:

If $k \equiv 1 \smod{4}$ then
\begin{align*}
 P := & \hspace{4mm} \Omega_0(1)\\
       & \cup \Omega_{1}(t-2, -(t-4), t-6, -(t-8), \dots, 6, -4) \\
       & \cup \Omega_{(t-2)/2}( 2, t)\\
       & \cup \Omega_{(3t+2)/2}(-2t, 2t-2, -(2t-4),2t-6, \dots, -(t+4), t+2);
\end{align*}
and if $k \equiv 3 \smod{4}$ then
\begin{align*}
P := & \hspace{4mm} \Omega_0(1)\\
     & \cup \Omega_{1} (t-2, -(t-4), t-6, -(t-8), \dots, 4, -2)\\
     & \cup \Omega_{t/2} (-t,  2t)\\
     & \cup \Omega_{3t/2} ( -(2t-2), 2t-4, -(2t-6), 2t-8, \dots, -(t+4), t+2).
\end{align*}

For example, 
\[
P = \left\{ \begin{array}{ll}
  [0, 1, 3, 7, -1, 5] & \textrm{ if } k = 5 \\
 \textrm{$[0, 1, 5, 3, -3, 9, -1, 7]$} & \textrm{ if } k = 7 \\
 \textrm{$[0, 1, 7, 3, 5, 13, -3, 11, -1, 9] $} & \textrm{ if } k = 9 \\
 \textrm{$[0, 1, 9, 3, 7, 5, -5, 15, -3, 13, -1, 11]$} & \textrm{ if } k = 11. \\
\end{array} \right.
\]

Let $u = \frac{k-1}{2}$ and note that, modulo $k$, the vertices of $P$ are congruent to 
\[
\begin{array}{ll}
0,1,k-2,3,\dots, u-1, u+1, u, u+2, u-2, \dots, 2, k-1, 0 & \textrm{ if } k \equiv 1 \ssmod{4}\\
0,1, k-2,3,\dots,u+2, u, u+1, u-1, u+3, \dots, 2, k-1, 0 & \textrm{ if } k \equiv 3 \ssmod{4}.\\
\end{array}
 \]

In either case, $P$ is a path with endpoints $0$ and $k$ having exactly one vertex from each congruence class modulo $k$ (except that the endpoints are both $0 \smod {k}$).  Thus $H_1 = \bigcup_{i \in \mathbb{Z}} (P+ki)$ is a Hamilton path in $G$.  It is straightforward to check that $\{H_1,H_1+1,H_1+2 \dots, H_1 + k-1\}$ is a Hamilton decomposition of $G$.
\end{proof}

We conclude this section on $2k$-valent infinite circulant graphs with a discussion of the case $k=3$.  By Theorem \ref{Cor_k_small_odd_prime}, if $a$ and $b$ are distinct positive integers, not divisible by $3$, then  $G = \Cay(\mathbb{Z}, \pm \{3,a,b\})$ is Hamilton-decomposable if and only if $G$ is admissible.  Note that graphs of the form $\Cay(\mathbb{Z}, \pm\{a,3t,3\})$, where $a \not\equiv 0 \smod{3}$ are admissible but not covered by Theorem \ref{Cor_k_small_odd_prime}, however we have verified by computer that several small admissible cases of this form are Hamilton-decomposable. 

A straightforward corollary of Theorem \ref{Thm_ab} is that if $a, b \in \mathbb{Z}^+$ are odd and relatively prime then $\Cay(\mathbb{Z}, \pm \{1,a,b\})$ is Hamilton-decomposable.  It remains an open problem to determine whether $\Cay(\mathbb{Z}, \pm \{1,a,b\})$ is Hamilton-decomposable when $a,b$ are both even and when $a,b$ are both odd but not relatively prime.  The next result answers this question when $a=2$.

\begin{theorem}\label{Thm_12c}
If $c\geq 3$ is an integer, then $\Cay(\mathbb{Z}, \pm\{1,2,c\})$ is Hamilton-decomposable if and only if it is admissible.
\end{theorem}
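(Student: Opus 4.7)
First I would verify the admissibility condition. Since $1 \in S^+$, we have $\gcd(S) = 1$ automatically, and the parity condition of Lemma~\ref{NecCondition} becomes $1+2+c \equiv 3 \pmod 2$, i.e., $c$ is even. So the content of the theorem is that, for every even $c \geq 4$, the graph $G := \Cay(\mathbb{Z}, \pm\{1,2,c\})$ decomposes into three edge-disjoint two-way-infinite Hamilton paths.

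My plan follows the template used in Theorems~\ref{ThmConsec}--\ref{Thm_consec_evens}: choose a period $n$ divisible by $3$ and construct a finite path $P$ in $G$ with endpoints $0$ and $n$, whose interior vertices realise every non-zero residue modulo $n$ exactly once, and which contains exactly $n/3$ edges of each length $1$, $2$, and $c$. Then $H_1 := \bigcup_{i \in \mathbb{Z}}(P + ni)$ is a Hamilton path of $G$. For the decomposition, I would choose a three-element shift set $T \subseteq \mathbb{Z}_n$ so that, writing $R_d$ for the set of starting residues modulo $n$ of the length-$d$ edges of $P$, the pair $(R_d, T)$ is a direct factorisation of $\mathbb{Z}_n$ for each $d \in \{1,2,c\}$; then $\{H_1 + t : t \in T\}$ is a Hamilton decomposition of $G$.

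As a warm-up the case $c = 4$ is clean: with $n = 6$ and $P = [0, 4, 2, 3, 7, 5, 6]$, one checks that $R_1 = R_2 = \{2, 5\}$ and $R_4 = \{0, 3\}$, and the shift set $T = \{0, 1, 2\}$ yields $R_d + T = \mathbb{Z}_6$ (with distinct sums) for every $d$, so $\{H_1, H_1+1, H_1+2\}$ is a Hamilton decomposition. For larger even $c$ the period $n = 6$ is inadequate---in particular when $6 \mid c$ every length-$c$ edge in a $P$ of period $6$ would connect two vertices of the same residue modulo $6$---so I expect the proof to split into subcases according to $c \pmod 6$ (and possibly $c \pmod{12}$), using larger periods such as $n = 2c$ or $n = 2c \pm 2$. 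In each subcase one builds $P$ as a ``snake'' that advances by length-$1$ and length-$2$ hops through short blocks of consecutive integers and uses length-$c$ edges to jump between these blocks, so as to hit every residue modulo $n$ while maintaining the $n/3$-of-each-length balance.

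The main technical obstacle will be engineering $P$ so that the starting residues of the length-$c$ edges, as well as those of the length-$1$ and length-$2$ edges, all satisfy the direct factorisation condition with the same shift set $T$. Because length-$c$ edges cannot be inserted anywhere in the snake (their landing points are forced by $c$, and for the $c \equiv 0 \pmod 6$ subcase $\gcd(c,n)$ creates rigid constraints on which residues can be joined), their placement largely dictates the overall structure of $P$, and matching $R_c$ to a common transversal simultaneously with $R_1$ and $R_2$ is the delicate step that forces the case analysis. Once $P$ has been produced, edge-disjointness of the three translates follows from an immediate coset calculation as in the proof of Theorem~\ref{ThmConsec_skipk}.
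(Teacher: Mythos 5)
Your framework is the right one and is essentially the paper's: build a starter path $P$ with endpoints $0$ and $n$ whose interior vertices hit each non-zero residue modulo $n$ exactly once and which carries $n/3$ edges of each length, take $H_1=\bigcup_{i\in\mathbb{Z}}(P+ni)$, and pick a three-element shift set $T$ so that for each length $d$ the starting residues $R_d$ satisfy $R_d+T=\mathbb{Z}_n$ as a direct sum. Your $c=4$ verification is correct (the paper instead disposes of $c=4$ via Theorem \ref{ThmConsec_skipk}). For the record, the paper writes $c=2t$, uses period $n=3t$ (which is automatically divisible by $3$ since $c$ is even, avoiding your need to juggle $n=2c$ versus $n=2c\pm2$ according to $c\bmod 3$), takes $T=\{0,t,2t\}$, and splits into the three cases $t$ odd, $t\equiv 0\pmod 4$, $t\equiv 2\pmod 4$.

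However, there is a genuine gap: for every even $c\geq 6$ you have described only the \emph{shape} of the construction (``a snake that advances by length-$1$ and length-$2$ hops \dots and uses length-$c$ edges to jump between blocks'') without exhibiting the path $P$, the period, or the shift set, and without verifying the direct-factorisation condition for $R_1$, $R_2$ and $R_c$ simultaneously. You correctly identify this as ``the delicate step that forces the case analysis,'' but that step \emph{is} the proof: the theorem's entire content for $c\geq 6$ consists of explicit families of starter paths (in the paper, built from gadgets $A_v=\Omega_v(1,2t,1,-2t)$, $B_v=\Omega_v(2t,-2,-2t,-2)$, $C_v=\Omega_v(2t,2,-2t,2)$ chained together differently in each congruence case) together with the routine but necessary checks that each $P$ is a path, meets every residue class modulo $n$ once, and has its length-$d$ edges starting at a transversal-compatible residue set. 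Until those constructions are produced and verified for all even $c\geq 6$, the argument establishes the theorem only for $c=4$.
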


\begin{proof} Suppose $G= \Cay(\mathbb{Z}, \pm\{1,2,c\})$ is admissible.  Then $c$ is even and we may assume that $c = 2t$ for some integer $t \geq 2$.  If $t = 2$, then $G$ is Hamilton-decomposable by Theorem \ref{ThmConsec_skipk}.
Thus, we can assume that $t \geq 3$.  

We divide the proof into three cases, $t$ odd, $t \equiv 0 \smod{4}$ and $t \equiv 2 \smod{4}$ and provide a construction for a Hamilton decomposition in each case.  The constructions are based on a starter path $P$ which has $3t$ edges, using $t$ edges of each length $1$, $2$ and $2t$.  
In each case it can be checked that $H_1 =  \bigcup_{i\in \mathbb{Z}}\left( P+3ti \right)$ is a Hamilton path and that $\{H_1, H_1+t, H_1+2t\}$ is a Hamilton decomposition of $G$. \\

In the cases below, we use the following notation:
\begin{align*}
A_v &:= \Omega_v(1,2t,1,-2t)  \\
B_v &:= \Omega_v(2t, -2, -2t, -2) \\
C_v &:= \Omega_v(2t, 2, -2t, 2). 
\end{align*}

\noindent \textbf{Case 1:} $t$ odd\\
Define
\begin{align*}
P =  \bigcup_{i=0}^{(t-3)\slash 2} A_{2i} \hspace{0.2cm} \cup
 [t-1,t+1,t+3,\dots,2t-2,2t, 2t-1,2t-3,2t-5, \dots, t+2, t, 3t].
\end{align*}
Thus, $P$ is of the form
\begin{align*}
\bigcup_{i=0}^{(t-3)\slash 2} A_{2i} \hspace{0.2cm} \cup \Omega_{t-1}(2,2,\dots,2) \cup \Omega_{2t}(-1, -2, -2, -2, \dots, -2) \cup \Omega_t(2t),
\end{align*}
and
\begin{align*}
E(P) = & \{ \{x,x+1\} \mid x \in \{0,2,\dots,t-3, 2t-1,2t+1,2t+3,\dots, 3t-2\} \} \cup \\
		& \{ \{x,x+2\} \mid x \in \{t-1,t, t+1,\dots,2t-2\} \} \cup \\
		& \{ \{x, x+2t\} \mid x \in \{1,2,\dots,t\} \}.
\end{align*}		
Figure \ref{fig_t3_t5} shows this construction for $t=3$ and $t=5$.

\begin{figure}[h!]
\begin{center}
\includegraphics[scale=1.3]{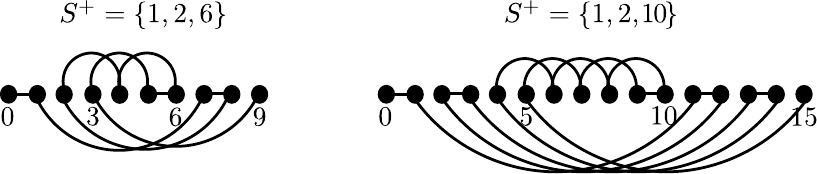}
\caption{Construction of $P$ from Theorem \ref{Thm_12c} when $t=3$ and $t = 5$.}
\label{fig_t3_t5}
\end{center}
\end{figure}

\noindent \textbf{Case 2: $t \equiv 0 \smod{4}$}\\

\noindent If $t = 4$, define $P =[0,1,9,11,3,5,6,7,8,10,2,4,12]$.  For $t \geq 8$, we define  
\begin{align*}
P = [& 0,1, 2t+1, 2t+3, 3,2, 2t+2,2t, 2t-1,2t-2,\dots,t+5, t+3, t+4,t+2,t+1,t-1]\\
	&  \bigcup_{i=0}^{(t-12)\slash 4} B_{t-1-4i} \hspace{0.2cm} \cup [7, 2t+7, 2t+5, 5,4] \hspace{0.2cm} \bigcup_{i=1}^{(t-4)\slash 4} C_{4i} \hspace{0.2cm}  \cup [t,3t].
\end{align*}
Thus, $P$ is of the form 
\begin{align*}
& \hspace{2mm} \Omega_0(1,2t, 2, -2t, -1, 2t, -2) \hspace{2mm} \cup \hspace{2mm} \Omega_{2t}(-1,-1, \dots, -1) \hspace{2mm} \cup \hspace{2mm} \Omega_{t+5}(-2,1,-2,-1,-2) \\
	&  \bigcup_{i=0}^{(t-12)\slash 4} B_{t-1-4i} \hspace{2mm} \cup \hspace{2mm} \Omega_7(2t, -2, -2t, -1) \hspace{0.2cm} \bigcup_{i=1}^{(t-4)\slash 4} C_{4i} \hspace{2mm}  \cup \hspace{2mm} \Omega_t(2t),
\end{align*}
and
\begin{align*}
E(P) = & \hspace{0.4cm}  \{ \{x,x+1\} \mid x \in \{0,2,4,t+1,t+3\} \textrm{ or } t+5 \leq x \leq 2t-1 \}  \\
		& \cup \{ \{x,x+2\} \mid 6 \leq x \leq t+3 \textrm{ where } x \equiv 2,3 \ssmod{4} \textrm{ or } \\
		& \hspace{2.8cm}   2t \leq x \leq  3t-3 \textrm{ where } x \equiv 0,1 \ssmod{4}  \}  \\
		& \cup \{ \{x, x+2t\} \mid 1 \leq x \leq t  \}.
\end{align*}	
Figure \ref{fig_t4_t8} shows the construction for $t = 4$ and $t=8$.\\
\begin{figure}[h!]
\begin{center}
\includegraphics[scale=1.3]{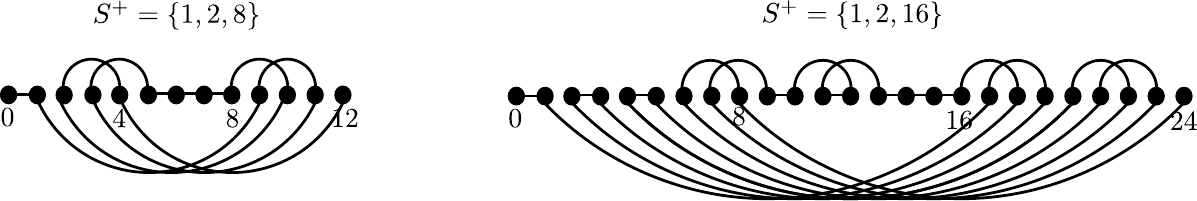}
\caption{Construction of $P$ from Theorem \ref{Thm_12c} when $t=4$ and $t =8$.}
\label{fig_t4_t8}
\end{center}
\end{figure}

\noindent \textbf{Case 3: $t \equiv 2 \smod{4}$}\\

\noindent The case $t =2$ is done, so here $t \geq 6$. We define
\begin{align*}
P = [& 0,1, 2t+1, 2t+3, 3,2, 2t+2,2t, 2t-1,2t-2,\dots,t+5, t+3, t+4,t+2,t+1,t-1]\\
	&  \bigcup_{i=0}^{(t-10)\slash 4} B_{t-1-4i} \hspace{0.2cm} \cup [5, 2t+5, 2t+4, 4,6] \hspace{0.2cm} \bigcup_{i=1}^{(t-6)\slash 4} C_{4i+2} \hspace{0.2cm}  \cup [t,3t].
\end{align*}
Thus, $P$ is of the form 
\begin{align*}
& \hspace{2mm} \Omega_0(1,2t,2,-2t,-1,2t,-2) \hspace{2mm} \cup \hspace{2mm} \Omega_{2t}(-1,-1, \dots, -1) \hspace{2mm} \cup \hspace{2mm} \Omega_{t+5}(-2,1,-2,-1,-2) \\
	&  \bigcup_{i=0}^{(t-10)\slash 4} B_{t-1-4i} \hspace{2mm} \cup \hspace{2mm} \Omega_5(2t,-1,-2t,2) \hspace{2mm} \bigcup_{i=1}^{(t-6)\slash 4} C_{4i+2} \hspace{2mm}  \cup \hspace{2mm} \Omega_t(2t),
\end{align*}
and
\begin{align*}
E(P) = & \hspace{0.4cm}  \{ \{x,x+1\} \mid x \in \{0,2,t+1,t+3,2t+4\}  \textrm{ or } t+5 \leq x \leq 2t-1 \}  \\
		& \cup \{ \{x,x+2\} \mid x \in \{2t, 2t+1\} \textrm{ or } \\
		& \hspace{2.8cm}  4 \leq x \leq t+3 \textrm{ where } x \equiv 0,1 \ssmod{4} \textrm{ or }\\
		& \hspace{2.8cm}   2t+6 \leq x \leq  3t-3 \textrm{ where } x \equiv 2,3 \ssmod{4}  \}  \\
		& \cup \{ \{x, x+2t\} \mid 1 \leq x \leq t  \}.
\end{align*}	
Figure \ref{fig_t6_t10} shows this construction for $t=6$ and $t = 10$.
\begin{figure}[h!]
\begin{center}
\includegraphics[scale=1.1]{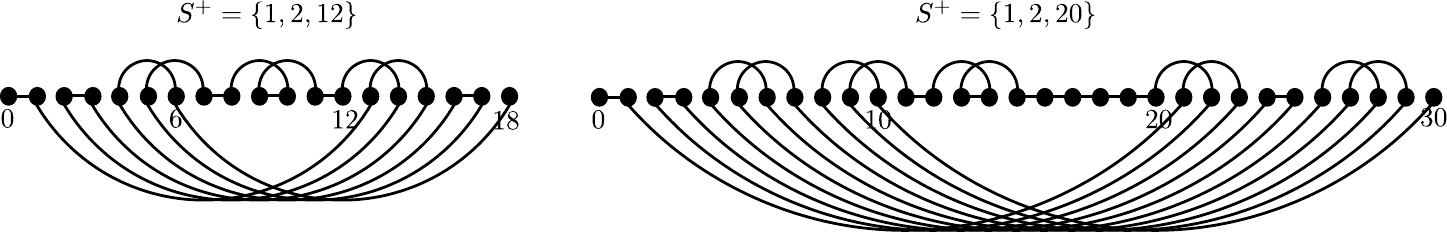}
\caption{Construction of $P$ from Theorem \ref{Thm_12c} when $t=6$ and $t=10$.}
\label{fig_t6_t10}
\end{center}
\end{figure}

\end{proof}

\noindent{\bf Acknowledgement:} The authors acknowledge the support of the Australian Research Council (grants DP150100530, DP150100506, DP120100790 and DP130102987) and the EPSRC (grant EP/M016242/1).  Some of this research was undertaken while Webb was an Ethel Raybould Visiting Fellow at The University of Queensland.


\end{document}